\documentclass[12pt]{amsart}
\usepackage{amsmath}
\usepackage{mathtools}
\usepackage{amssymb}
\usepackage{amsthm}
\usepackage{graphicx}
\usepackage{enumerate}
\usepackage[mathscr]{eucal}
\usepackage[pagewise]{lineno}
\usepackage{tikz}
\usetikzlibrary{decorations.text,calc,arrows.meta}
\theoremstyle{plain}

\setlength{\textwidth}{121.9mm}
\setlength{\textheight}{176.2mm}
%\setpagenumbers
%\numberwithin{equation}{section}
\setlength{\parindent}{4em}
\usepackage[english]{babel}
\DeclareMathOperator{\conv}{conv}
\DeclareMathOperator{\Span}{Span}
\DeclareMathOperator{\sgn}{sgn}

\DeclareMathOperator{\Ext}{Ext}

\DeclareMathOperator{\diam}{diam}

\newtheorem{theorem}{Theorem}
\newtheorem{cor}[theorem]{Corollary}

\newtheorem{prop}[theorem]{Proposition}

\theoremstyle{definition}
\newtheorem{definition}[theorem]{Definition}

\bibliographystyle{apa}
\usepackage{hyperref}
\hypersetup{
    colorlinks=true,
    linkcolor=blue,
    filecolor=magenta,      
    urlcolor=cyan,
}
\usepackage[pagewise]{lineno}
\bibliographystyle{apa}
\usepackage{hyperref}
%\linenumbers

\begin{document}

\title[Geometry of $\ell_p$-direct sums]{Geometry of $\ell_p$-direct sums of normed linear spaces}

\author[Bose]{Babhrubahan Bose}
                \newcommand{\acr}{\newline\indent}
                
\subjclass[2020]{Primary 46B20, Secondary 46B45}
\keywords{Smooth points, Support functionals, Approximate smoothness, Birkhoff-James orthogonality, Left-symmetric points, Right-symmetric point}

\address[Bose]{Department of Mathematics\\ Indian Institute of Science\\ Bengaluru 560012\\ Karnataka \\INDIA\\ }
\email{babhrubahanb@iisc.ac.in}

 \thanks{The research of Babhrubahan Bose is funded by PMRF research fellowship under the supervision of Professor Apoorva Khare and Professor Gadadhar Misra.}

\begin{abstract}
We consider $\ell_p$-direct sums ($1\leq p<\infty$) and $c_0$-direct sums of countably many normed spaces and find the duals of these spaces. We characterize the support functionals of arbitrary elements in these spaces to characterize smoothness and approximate smoothness, both locally and globally. These results let us obtain examples of spaces that are not approximately smooth but where every non-zero element is approximately smooth. We also characterize Birkhoff-James orthogonality and its pointwise symmetry in these spaces.
\end{abstract}

\maketitle   
   
\section*{Introduction}
The aim of the present article is to study the geometry of the normed linear spaces constructed by taking countably infinite $\ell_p$ direct sums (for $1\leq p<\infty$) of normed linear spaces. We also consider $c_0$ analogues of the direct sums and find the duals of these spaces. We further characterize the support functionals of a non-zero element in these spaces. Consequently, we characterize smoothness and approximate smoothness in them and answer a question about the approximate smoothness of a space raised by Chmieli\'nski, Khurana, and Sain in \cite{approx}. We finish by characterizing Birkhoff-James orthogonality and its pointwise symmetry in these spaces. A similar analysis was done for $\ell_p$ direct sums of a pair of normed linear spaces in \cite{approx} by Chmlie\'nski, Khurana, and Sain, where the support functionals and approximate smoothness in these finite direct sums were studied. In this article, we consider $\ell_p$-direct sums of countably many normed linear spaces and study support functionals, approximate smoothness, Birkhoff-James orthogonality, and its pointwise symmetry in such spaces.\par

Let us establish the relevant notations and terminologies to be used throughout the article. Throughout, $\mathbb{K}=\mathbb{R}$ or $\mathbb{C}$. Given a normed linear space $\mathbb{X}$ over $\mathbb{K}$, let $B_\mathbb{X}$ denote the closed unit ball of the space. For any $C\subseteq\mathbb{X}$ convex, we denote the collection of all extreme points of $C$ by $\Ext(C)$. Let $\mathbb{X}^*$ stand for the continuous dual of the space $\mathbb{X}$ and define the support functional of a non-zero element $x\in\mathbb{X}$ to be any $f\in\mathbb{X}^*$ such that
\begin{align*}
    \|f\|=1,~ f(x)=\|x\|.
\end{align*}
Let $J(x)$ denote the collection of support functionals of a non-zero $x$. Clearly, $J(x)$ is convex and weak$^*$ compact. The diameter of $J(x)$ for a non-zero $x$ is denoted by $D(x)$. We also define $\mathcal{D}(\mathbb{X})$ to be the supremum of $D(x)$ over all non-zero $x$, i.e.
\[D(x):=\diam(J(x)),~~\mathcal{D}(\mathbb{X}):=\sup\{D(x):x\in\mathbb{X}\setminus\{0\}\}.\]
A non-zero element $x\in\mathbb{X}$ is said to be \textit{smooth} if it has a unique support functional. Hence, a non-zero $x$ is smooth if and only if $D(x)=0$. A normed space $\mathbb{X}$ is \textit{smooth} if every non-zero element of the space is smooth, i.e., $\mathcal{D}(\mathbb{X})=0$. Let us recall the definition of approximate smoothness as proposed by Chmieli\'nski, Khurana and Sain in \cite{approx}.
\begin{definition}
Let $\epsilon\in[0,2)$. A non-zero $x\in\mathbb{X}$ is called \textit{$\epsilon$-smooth} if $D(x)\leq\epsilon$. A non-zero $x\in\mathbb{X}$ is called \textit{approximately smooth} if $x$ is $\epsilon$-smooth for some $\epsilon\in[0,2)$. The space $\mathbb{X}$ is said to be \textit{approximately smooth ($\epsilon$-smooth)} if $\mathcal{D}(\mathbb{X})\leq\epsilon$ for some $0\leq\epsilon<2$.
\end{definition}
Given two elements $x,y\in\mathbb{X}$, $x$ is defined to be \textit{Birkhoff-James orthogonal} to $y$ \cite{B}, denoted by $x\perp_By$ if 
\[\|x+\lambda y\|\geq\|x\|,~\textit{for every scalar}~\lambda.\]
James proved in \cite{james} that $x\perp_By$ if and only if $x=0$ or $f(y)=0$ for some support functional $f$ of $x$. In the same article, he proved that a non-zero point $x\in\mathbb{X}$ is smooth if and only if Birkhoff-James orthogonality is right additive at $x$, i.e., for any $y,z\in\mathbb{X}$,
\begin{align*}
    x\perp_By,~x\perp_Bz~\Rightarrow~x\perp_B(y+z).
\end{align*}
James proved in \cite{james2} that in a normed linear space of dimension 3 or more, Birkhoff-James orthogonality is symmetric if and only if the space is an inner product space. However, the importance of studying the point-wise symmetry of Birkhoff-James orthogonality in describing the geometry of normed linear spaces has been illustrated in \cite[Theorem 2.11]{CSS}, \cite[Corollary 2.3.4]{Sain}. Let us recall the following definition in this context from \cite{Sain2}, which will play an important part in our present study.
\begin{definition}
An element $x$ of a normed linear space $\mathbb{X}$ is said to be \textit{left-symmetric} (\textit{resp. right-symmetric}) if 
\begin{align*}
    x\perp_By\;\Rightarrow\; y\perp_Bx~~(\textit{resp.~}y\perp_Bx\;\Rightarrow\;x\perp_By),
\end{align*}
for every $y\in \mathbb{X}$.
\end{definition}
Note that we refer to the left-symmetric and right-symmetric points of a given normed linear space by the term {\textit{point-wise symmetry of Birkhoff-James orthogonality}. Birkhoff-James orthogonality and its pointwise symmetry have been the focus of tremendous research aimed at understanding the geometry of a normed space. We refer the readers to \cite{me}, \cite{usseq}, \cite{CSS},\cite{dkp}, \cite{1}, \cite{3}, \cite{4}, \cite{5}, \cite{KP}, \cite{Sain2}, \cite{Sain}, \cite{8}, \cite{10}, \cite{SRBB}, \cite{turnsek}, \cite{12}  for some of the prominent works in this direction.\par
A \textit{semi-inner product} on a $\mathbb{K}$ vector space $\mathbb{V}$ is defined to be a map $[\cdot,\cdot]:\mathbb{V}\times\mathbb{V}\to\mathbb{K}$ such that for $x,y,z\in\mathbb{V}$ and $\lambda\in\mathbb{K}$,
\begin{enumerate}
    \item $[x,x]\geq0$ with equality if and only if $x=0$.
    \item $[x,y]+\lambda[x,z]=[x,y+\lambda z]$.
    \item $[\lambda x,y]=\overline{\lambda} [x,y]$
    \item $|[x,y]|^2\leq[x,x][y,y]$.
\end{enumerate}
A \textit{semi-inner product} on a Banach space $\mathbb{X}$ is a map $[\cdot,\cdot]:\mathbb{X}\times\mathbb{X}\to\mathbb{K}$ satisfying the above four properties along with $[x,x]=\|x\|^2$ for every $x\in\mathbb{X}$. Construction of a semi-inner product on $\mathbb{X}$ requires a map $\Psi:\mathbb{K}\mathbb{P}\mathbb{X}\to S_{\mathbb{X}^*}$ such that $\Psi([x])$ is the support functional of some $x_0\in [x]\cap S_\mathbb{X}$, where $\mathbb{K}\mathbb{P}\mathbb{X}$ denotes the $\mathbb{K}$-projective space of $\mathbb{X}$ \footnote{Recall, this is the set of equivalence classes in $\mathbb{X}$ under the relation of multiplication by nonzero scalars in $\mathbb{K}$ -- i.e., the set of lines in $\mathbb{X}$.} and $[x]$ denotes the equivalence class of $x\in\mathbb{X}\setminus \{0\}$ in $\mathbb{K}\mathbb{P}\mathbb{X}$. Note that the element $x_0$ is uniquely determined by the map $\Psi$ and the element $[x]\in\mathbb{K}\mathbb{P}\mathbb{X}\setminus \{0\}$. The semi-inner product can then be constructed as
\begin{align*}
    [x,y]:=
\begin{cases}
\overline{\lambda}\left(\Psi([x])\right)(y);~x=\lambda x_0,~(\Psi([x]))(x_0)=1,~~x\neq0~x,y\in\mathbb{X}\\
0,~x=0,~y\in\mathbb{X}.
\end{cases}
\end{align*}
Note that a non-zero point $x\in\mathbb{X}$ is smooth if and only if 
\begin{align*}
    [x,y]_1=[x,y]_2,~\textit{for every}~y\in\mathbb{X},
\end{align*}
for every pair $[\cdot,\cdot]_1$, $[\cdot,\cdot]_2$ of semi-inner products on $\mathbb{X}$. Also, for $x,y\in\mathbb{X}$, $x\perp_By$ if and only if $[y,x]=0$ for some semi-inner product $[\cdot,\cdot]$ on $\mathbb{X}$.\par
We now define a notion of pointwise symmetry of semi-inner products. 
\begin{definition}\label{sip-commute}
    Let $1<p<\infty$. A point $x\in\mathbb{X}$ is said to be \textit{$p$-left (resp.$p$-right) s.i.p. commuting} with $y\in\mathbb{X}\setminus\{0\}$ if given any semi-inner product $[\cdot,\cdot]$, there exists a semi-inner product $[\cdot,\cdot]'$ such that
    \[[y,x]'=\left|\frac{[x,y]}{\|x\|\|y\|}\right|^{p-2}\overline{[x,y]}~~\left(\textit{resp.}~[y,x]=\left|\frac{[x,y]'}{\|x\|\|y\|}\right|^{p-2}\overline{[x,y]'}\right).\]
    A point $x\in\mathbb{X}$ is said to be \textit{$p$-left (resp. $p$-right) s.i.p. symmetric} if $x$ is $p$-left (resp. $p$-right) s.i.p. commuting with every $y\in\mathbb{X}$. Also, if $x\in\mathbb{X}$ is said to be \textit{left (resp. right) s.i.p. symmetric} if $x$ is $2$-left (resp. $2$-right) s.i.p. symmetric.
\end{definition}We can immediately note the following proposition relating s.i.p. symmetry with pointwise symmetry of Birkhoff-James orthogonality.
\begin{prop}
    If $x\in\mathbb{X}$ is $p$-left (resp. $p$-right) s.i.p. symmetric, then $x$ is a left-symmetric (resp. right-symmetric) point.
\end{prop}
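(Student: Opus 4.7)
The plan is to translate everything into s.i.p.\ language using the standard link between Birkhoff-James orthogonality and semi-inner products recalled just before Definition \ref{sip-commute}: namely, $a \perp_B b$ is equivalent to the existence of a semi-inner product in which the pairing of $a$ and $b$ vanishes. Under this dictionary, the two defining equalities for $p$-left and $p$-right s.i.p.\ commuting become transfer principles between zeros of $[x,y]$ and zeros of $[y,x]$ in appropriate semi-inner products, which is exactly what point-wise symmetry of $\perp_B$ requires.

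For the left-symmetric case I would start with an arbitrary $y$ satisfying $x \perp_B y$, pick a semi-inner product in which this orthogonality is witnessed by a vanishing pairing of $x$ and $y$, and then invoke the $p$-left s.i.p.\ commuting of $x$ with $y$ on this chosen semi-inner product. The resulting $[\cdot,\cdot]'$ satisfies
\[ [y,x]' \;=\; \left|\frac{[x,y]}{\|x\|\|y\|}\right|^{p-2}\overline{[x,y]}, \]
whose right-hand side is $0$ whenever $[x,y]=0$, and hence produces a semi-inner product witnessing $y \perp_B x$. The right-symmetric case follows the same script in reverse: starting from $y \perp_B x$, one picks a semi-inner product realizing it, applies the $p$-right commuting property to obtain an $[\cdot,\cdot]'$ with
\[ [y,x] \;=\; \left|\frac{[x,y]'}{\|x\|\|y\|}\right|^{p-2}\overline{[x,y]'}, \]
and then reads off that the left-hand side being $0$ forces the unique zero $[x,y]'=0$ on the right, producing a semi-inner product witnessing $x \perp_B y$.

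The only delicate point I anticipate is making sense of the map $a \mapsto |a|^{p-2}\bar a$ at $a = 0$, particularly for $1 < p < 2$ where $|a|^{p-2}$ diverges. Writing the expression as $|a|^{p-1}\cdot(\bar a/|a|)$ makes it clear that the natural continuous extension takes the value $0$ at the origin and that $0$ is its only zero for every $p > 1$. Once this convention is fixed, both implications reduce to a single substitution and no further computation is needed.
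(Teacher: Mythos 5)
Your argument is correct and is essentially the paper's own proof: both reduce left/right symmetry to the observation that $x\perp_B y$ holds precisely when $[x,y]=0$ for some semi-inner product, and then read off the vanishing of the transferred pairing from the defining identity of $p$-left (resp.\ $p$-right) s.i.p.\ commuting. Your remark that $a\mapsto|a|^{p-2}\overline{a}$ extends continuously by $0$ at $a=0$ and vanishes only there is a sensible clarification the paper leaves implicit.
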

\begin{proof}
    Follows immediately from the observation that $x\perp_By$ if and only if $[x,y]=0$ for some semi-inner product $[\cdot,\cdot]$.
\end{proof}
\par
\subsection*{Organization of the article}
In the first section, we define the $\ell_p$-direct sums ($1\leq p<\infty$) and $c_0$-direct sums of normed spaces and characterize the duals of these spaces. In the second section, we characterize the support functionals of these elements and obtain values of diameters of any point in the space characterizing approximate smoothness completely. We also use these results to answer a question raised by Chmli\'enski, Khurana, and Sain in \cite{approx}.
In the final section, we characterize Birkhoff-James orthogonality and its pointwise symmetry in these spaces.

\section{$\ell_p$-direct sums}
We now define the $\ell_p$ direct sums of a sequence of normed linear spaces. Throughout, $\mathbb{X}_n$ would denote a sequence of non-trivial (of dimension more than 0) normed linear spaces. We define the following direct sums:
\begin{definition}
    Let $1\leq p<\infty$. Then the \textit{$\ell_p$-direct sum} of $\mathbb{X}_n$ is defined as:
    \begin{align*}
        \bigoplus\limits_{p}\mathbb{X}_n:=\left\{\{x_n\}_{n\in\mathbb{N}}:x_n\in\mathbb{X}_n,~\sum\limits_{n=1}^\infty\|x_n\|^p<\infty\right\}.
    \end{align*}
    Also define:
    \begin{align*}
        \bigoplus\limits_\infty\mathbb{X}_n&:=\left\{\{x_n\}_{n\in\mathbb{N}}:x_n\in\mathbb{X}_n,~\sup\limits_{n=1}^\infty\|x_n\|<\infty\right\},\\
        \bigoplus\limits_0\mathbb{X}_n&:=\left\{\{x_n\}_{n\in\mathbb{N}}:x_n\in\mathbb{X}_n,~\lim\limits_{n\to\infty}\|x_n\|=0\right\}.
    \end{align*}
\end{definition}

We define the norms for these spaces as follows:
\begin{definition}
    Let $1\leq p<\infty$. Then for $\{x_n\}_{n\in\mathbb{N}}$ $\in$ $\bigoplus\limits_{p}\mathbb{X}_n$, define:
    \begin{align*}
        \left\|\{x_n\}_{n\in\mathbb{N}}\right\|_p:=\left(\sum\limits_{n=1}^\infty\|x_n\|^p\right)^\frac{1}{p}.
    \end{align*}
    Similarly, for $\{x_n\}_{n\in\mathbb{N}}$ $\in$ $\bigoplus\limits_{\infty}\mathbb{X}_n$,
    define
    \begin{align*}
        \left\|\{x_n\}_{n\in\mathbb{N}}\right\|_\infty:=\sup\limits_{n=1}^\infty\|x_n\|.
    \end{align*}
    We also define for $\{x_n\}_{n\in\mathbb{N}}$ $\in$ $\bigoplus\limits_{0}\mathbb{X}_n$,
    \begin{align*}
        \left\|\{x_n\}_{n\in\mathbb{N}}\right\|_0:=\max\limits_{n=1}^\infty\|x_n\|.
    \end{align*}
\end{definition}
\begin{prop}
    For $p\in[1,\infty]\cup\{0\}$, $\left(\bigoplus\limits_{p}\mathbb{X}_n,\|\cdot\|_p\right)$ is a normed linear space. Also, it is a Banach space if and only if $\mathbb{X}_n$ is a Banach space for every $n\in\mathbb{N}$.
\end{prop}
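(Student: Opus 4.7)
The plan is to verify first that $\bigoplus_p\mathbb{X}_n$ is a vector subspace of $\prod_n\mathbb{X}_n$ on which $\|\cdot\|_p$ is a norm, and then to treat completeness in both directions separately.

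For the normed-space structure, closure under scalar multiplication, positivity, absolute homogeneity, and definiteness are immediate from the corresponding properties of each $\|\cdot\|$ on $\mathbb{X}_n$. Closure under addition and the triangle inequality both reduce to a single scalar inequality applied to the non-negative sequences $\{\|x_n\|\}_n$ and $\{\|y_n\|\}_n$, combined with the triangle inequalities $\|x_n+y_n\|\le\|x_n\|+\|y_n\|$ inside each $\mathbb{X}_n$. For $1\le p<\infty$ the scalar inequality is Minkowski's inequality; for $p=\infty$ it is subadditivity of $\sup$; for $p=0$ it is subadditivity of $\max$ together with the elementary observation that $\|x_n\|,\|y_n\|\to0$ forces $\|x_n+y_n\|\to0$.

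For completeness, the easy direction (direct sum Banach implies each $\mathbb{X}_n$ Banach) is handled by the canonical embeddings $\iota_{n_0}:\mathbb{X}_{n_0}\to\bigoplus_p\mathbb{X}_n$ that place a vector in slot $n_0$ and zero elsewhere: these are linear isometries, and their images are closed, being intersections of kernels of the coordinate projections $\pi_n$ (each of which is $1$-Lipschitz by construction of $\|\cdot\|_p$), so each $\mathbb{X}_{n_0}$ inherits completeness from the direct sum. Conversely, assuming every $\mathbb{X}_n$ is Banach, I would take a Cauchy sequence $\{x^{(k)}\}_k$ with $x^{(k)}=\{x_n^{(k)}\}_n$, invoke the $1$-Lipschitzness of $\pi_n$ to see that each $\{x_n^{(k)}\}_k$ is Cauchy in the Banach space $\mathbb{X}_n$, and define $x_n:=\lim_k x_n^{(k)}$ and $x:=\{x_n\}_n$.

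The main obstacle is to show that the coordinate-wise limit $x$ actually belongs to $\bigoplus_p\mathbb{X}_n$ and that $x^{(k)}\to x$ in $\|\cdot\|_p$. I would run the standard truncation argument: given $\epsilon>0$, fix $K$ so that $\|x^{(k)}-x^{(\ell)}\|_p\le\epsilon$ for $k,\ell\ge K$; for $1\le p<\infty$, truncate to the first $N$ coordinates to obtain $\sum_{n=1}^N\|x_n^{(k)}-x_n^{(\ell)}\|^p\le\epsilon^p$, let $\ell\to\infty$ using continuity of each $\|\cdot\|$, and finally let $N\to\infty$ to conclude $\|x^{(k)}-x\|_p\le\epsilon$. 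The same template with $\sup$ in place of the finite sum handles $p=\infty$. For $p=0$ I additionally need $\|x_n\|\to0$: choosing $k$ with $\|x^{(k)}-x\|_\infty<\epsilon/2$ and using $\|x_n^{(k)}\|\to0$, the splitting $\|x_n\|\le\|x_n-x_n^{(k)}\|+\|x_n^{(k)}\|$ gives $\|x_n\|<\epsilon$ for $n$ large. In every case this places $x^{(k)}-x$, hence $x$, in $\bigoplus_p\mathbb{X}_n$ and yields $x^{(k)}\to x$, completing the proof.
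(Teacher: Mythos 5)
Your proof is correct and follows essentially the same route as the paper: each coordinate sequence is Cauchy because the coordinate projections are $1$-Lipschitz, one takes coordinate-wise limits, and the standard truncation argument (let $\ell\to\infty$ in a finite block, then let $N\to\infty$) gives both membership of the limit in $\bigoplus_p\mathbb{X}_n$ and convergence in $\|\cdot\|_p$, with $p=\infty,0$ handled by the same template. The only cosmetic difference is in the easy direction, where you argue that the isometric copy of $\mathbb{X}_{n_0}$ is a closed subspace of the Banach direct sum, whereas the paper argues by contrapositive, embedding a non-convergent Cauchy sequence of $\mathbb{X}_{n_0}$ into the $n_0$-th slot; both are valid.
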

\begin{proof}
The first statement follows from the sub-additivity of the norms and Minkowski's inequality. Also, if any $\{x_k^n\}_{k\in\mathbb{N}}\subset\mathbb{X}_n$ is a Cauchy sequence which is not convergent, then defining $\{z^k\}_{k\in\mathbb{N}}\subset\bigoplus\limits_{p}\mathbb{X}_n$ as:
    \begin{align*}
        z^k:=\{z^k_j\}_{j\in\mathbb{N}},~z^k_j:=\begin{cases}
            x_k^n,~j=n,\\
            0,~\text{otherwise},
        \end{cases}
    \end{align*}
    gives a Cauchy sequence in $\bigoplus\limits_{p}\mathbb{X}_n$ which is not convergent. \par
    Conversely, assume all the $\mathbb{X}_n$ to be Banach spaces. Let $x^k=\{x_n^k\}_{n\in\mathbb{N}}\in\bigoplus\limits_p\mathbb{X}_n$ be such that $\{x^k\}_{k\in\mathbb{N}}$ is a Cauchy sequence. Then clearly as $\|x^k\|_p\geq\|x^k_n\|$ for every $k,n\in\mathbb{N}$, $\{x^k_n\}_{n\in\mathbb{N}}$ is a Cauchy sequence in $\mathbb{X}_n$. Let $y_n:=\lim\limits_{k\to\infty}x_n^k$.\par
    Let $p\in[1,\infty)$ and consider $\epsilon>0$. Fix $N\in\mathbb{N}$. Then for $k,j>K$ for $K$ sufficiently large,
    \begin{align*}
        \sum\limits_{n=1}^N\|x_n^k-x_n^j\|^p\leq\sum\limits_{n=1}^\infty\|x_n^k-x_n^j\|^p=\|x^k-x^j\|_p^p<\epsilon^p.
    \end{align*}
    Taking $j\to\infty$, we get
    \begin{align}\label{estimate1}
        \sum\limits_{n=1}^N\|x_n^k-y_n\|^p<\epsilon^p.
    \end{align}
    Hence we have 
    \begin{align*}
        \left(\sum\limits_{n=1}^N\|y_n\|^p\right)^\frac{1}{p}\leq \left(\sum\limits_{n=1}^N\|x_n^k-y_n\|^p\right)^\frac{1}{p}+\left(\sum\limits_{n=1}^N\|x_n^k\|^p\right)^\frac{1}{p}<\epsilon+\|x^k\|_p.
    \end{align*}
    Taking $N\to\infty$ now yields $y=\{y_n\}_{n\in\mathbb{N}}\in\bigoplus\limits_p\mathbb{X}_n$. Now taking $N\to\infty$ in \eqref{estimate1}, we get that for $k>K$,
    \begin{align*}
        \|x^k-y\|_p<\epsilon.
    \end{align*}
    For $p=\infty,0$, the proof proceeds similarly.
\end{proof}

We now characterize the dual of $\bigoplus\limits_p\mathbb{X}_n$ for $p=[1,\infty)\cup\{0\}$. 
\begin{theorem}
    Let $p=[1,\infty)$. Let $q\in(1,\infty]$ such that $\frac{1}{p}+\frac{1}{q}=1$. For $p=0$, set $q=1$ by definition. Then the dual of $\bigoplus\limits_p\mathbb{X}_n$ is isometrically isomorphic to $\bigoplus\limits_q\mathbb{X}_n^*$ with $f=\{f_n\}_{n\in\mathbb{N}}\in\bigoplus\limits_q\mathbb{X}_n^*$ acting on $\bigoplus\limits_p\mathbb{X}_n$ as
    \begin{align*}
        f(x):=\sum\limits_{n=1}^\infty f_n(x_n),~~x=\{x_n\}_{n\in\mathbb{N}}\in\bigoplus\limits_p\mathbb{X}_n.
    \end{align*}
\end{theorem}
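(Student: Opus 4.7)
The plan is to define the candidate isometric isomorphism
\[
\Phi\colon \bigoplus\nolimits_q\mathbb{X}_n^*\longrightarrow \left(\bigoplus\nolimits_p\mathbb{X}_n\right)^{\!*},\qquad \Phi(f)(x):=\sum_{n=1}^\infty f_n(x_n),
\]
and verify in turn that (i) $\Phi(f)$ is a well-defined bounded functional with $\|\Phi(f)\|\leq \|f\|_q$, (ii) $\|\Phi(f)\|\geq \|f\|_q$, and (iii) $\Phi$ is surjective. Step (i) is an application of the scalar Hölder inequality to the numerical sequences $(\|f_n\|)_n$ and $(\|x_n\|)_n$:
\[
\sum_{n=1}^\infty |f_n(x_n)|\leq \sum_{n=1}^\infty \|f_n\|\,\|x_n\|\leq \|(\|f_n\|)_n\|_q\cdot\|(\|x_n\|)_n\|_p=\|f\|_q\,\|x\|_p,
\]
with the conventions $q=\infty$ when $p=1$ and $q=1$ when $p=0$; absolute convergence and the norm bound follow.

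For step (ii), given $\varepsilon>0$ pick $u_n\in S_{\mathbb{X}_n}$ with $f_n(u_n)>(1-\varepsilon)\|f_n\|$ and test $\Phi(f)$ against a carefully chosen element. For $1<p<\infty$ take
\[
x^N:=\{c_n u_n\}_{n\in\mathbb{N}},\qquad c_n:=\|f_n\|^{q-1}\text{ for }n\leq N,\ c_n:=0\text{ for }n>N.
\]
Using $p(q-1)=q$, one gets $\|x^N\|_p=\bigl(\sum_{n\leq N}\|f_n\|^q\bigr)^{1/p}$ and $\Phi(f)(x^N)>(1-\varepsilon)\sum_{n\leq N}\|f_n\|^q$; dividing and letting first $N\to\infty$ then $\varepsilon\to 0$ yields $\|\Phi(f)\|\geq \|f\|_q$. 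For $p=1$ (so $q=\infty$) pick a single index $n$ with $\|f_n\|$ near $\|f\|_\infty$ and use the one-coordinate element $u_n$. For $p=0$ (so $q=1$) take $x^N:=\{u_n\}_{n\leq N}$, which lies in $\bigoplus_0\mathbb{X}_n$ with $\|x^N\|_0=1$ and $\Phi(f)(x^N)>(1-\varepsilon)\sum_{n\leq N}\|f_n\|$, then send $N\to\infty$.

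For step (iii), given $\phi\in(\bigoplus_p\mathbb{X}_n)^*$ define $f_n\in\mathbb{X}_n^*$ by $f_n(y):=\phi(\iota_n(y))$, where $\iota_n\colon \mathbb{X}_n\to\bigoplus_p\mathbb{X}_n$ sends $y$ to the sequence with $y$ in the $n$-th slot and $0$ elsewhere. Then $\|f_n\|\leq \|\phi\|$, and the test-element construction of step (ii) applied to $\phi$ (rather than $\Phi(f)$) bounds the partial sums $\bigl(\sum_{n\leq N}\|f_n\|^q\bigr)^{1/q}\leq \|\phi\|$, showing $\{f_n\}_n\in\bigoplus_q\mathbb{X}_n^*$. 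Finally, on finitely supported sequences $\phi$ and $\Phi(\{f_n\})$ agree by linearity; since such sequences are dense in $\bigoplus_p\mathbb{X}_n$ for $p\in[1,\infty)\cup\{0\}$ and both functionals are continuous, they agree everywhere.

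The main obstacle is organizing step (ii), since the optimal test element genuinely differs across the three regimes $p=1$, $1<p<\infty$, and $p=0$; the exponent identity $p(q-1)=q$ is what makes the reflexive-range choice $c_n=\|f_n\|^{q-1}$ produce a sharp bound, while the endpoint cases require separate (though easier) arguments. Everything else — well-definedness, continuity, the density step — is routine once the sharp lower bound is in hand.
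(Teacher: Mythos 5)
Your proof is correct, and its overall skeleton (H\"older for the upper bound, near-norming test vectors for the lower bound, coordinate functionals $f_n=\phi\circ\iota_n$ for surjectivity) matches the paper's. The one place where you genuinely diverge is the surjectivity step: the paper shows that $\sum_n\|\psi_n\|\,|c_n|<\infty$ for \emph{every} scalar sequence $\{c_n\}\in\ell_p$ and then invokes the scalar converse-H\"older fact to conclude $\{\|\psi_n\|\}\in\ell_q$, whereas you bound the truncated sums directly, testing $\phi$ against the finitely supported vectors $\{\|f_n\|^{q-1}u_n\}_{n\le N}$ to get the uniform estimate $\bigl(\sum_{n\le N}\|f_n\|^q\bigr)^{1/q}\le\|\phi\|/(1-\varepsilon)$. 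Your route is more self-contained and quantitative (it reproves the scalar fact rather than citing it, and it immediately gives $\|\Phi^{-1}(\phi)\|_q\le\|\phi\|$, so the isometry drops out), and you also make explicit two points the paper leaves implicit: the identification of $\phi$ with $\Phi(\{f_n\})$ via density of finitely supported sequences, and the separate treatment of the endpoint regimes $p=1$ and $p=0$, which the paper dismisses with ``follow similarly.'' The only cosmetic remarks: when $f_n=0$ the strict inequality $f_n(u_n)>(1-\varepsilon)\|f_n\|$ cannot hold, but those indices contribute nothing since $c_n=0$ (the paper handles this by setting the test vector to $0$ there), and over $\mathbb{C}$ one should rotate $u_n$ by a unimodular scalar so that $f_n(u_n)$ is real and positive; neither affects the argument.
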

\begin{proof}
    First let $1<p<\infty$. Clearly for $f=\{f_n\}_{n\in\mathbb{N}}\in\bigoplus_q\mathbb{X}_n^*$, we have by Minkowski's inequality,
    \begin{align*}
        \left|\sum\limits_{n=1}^\infty f_n(x_n)\right|\leq\sum\limits_{n=1}^\infty\|f_n\|\|x_n\|\leq\|f\|_q\|x\|_p,
    \end{align*}
    for any $x=\{x_n\}_{n\in\mathbb{N}}\in\bigoplus\limits_p\mathbb{X}_n$. Further for $\epsilon>0$, find $x_n\in S_{\mathbb{X}_n}$ such that
    \begin{align*}
        f_n(x_n)>\|f_n\|-\frac{\epsilon \|f\|_q^{q-1}}{2^n\|f_n\|^{q-1}},
    \end{align*}
    whenever $f_n\neq0$ and take $x_n=0$ otherwise.
    Then $y=\{y_n\}_{n\in\mathbb{N}}$ given by $y_n=\frac{\|f_n\|^{q-1}}{\|f\|_q^{q-1}} x_n$ is an element of the unit sphere of $\bigoplus\limits_p\mathbb{X}_n$. Further,
    \begin{align*}
        f(y)=\sum\limits_{n=1}^\infty \frac{\|f_n\|^{q-1}}{\|f\|_q^{q-1}}f_n(x_n)>\sum\limits_{n=1}^\infty\frac{\|f_n\|^{q-1}}{\|f\|_q^{q-1}}\left(\|f_n\|-\frac{\epsilon \|f\|_q^{q-1}}{2^n\|f_n\|^{q-1}}\right)=\|f\|_q-\epsilon.
    \end{align*}
    Hence $f$ is a continuous functional on $\bigoplus\limits_p\mathbb{X}_n$.\par
    Now let $\psi$ be a continuous functional on $\bigoplus\limits_p\mathbb{X}_n$. Then for any $n\in\mathbb{N}$, 
    \[x\mapsto\psi(xe_n),~x\in\mathbb{X}_n,\]
    is a bounded linear functional on $\mathbb{X}_n$, where $xe_n$ stands for the sequence having the $n$-th coordinate $x$ and the rest zero. Let us denote this functional by $\psi_n\in\mathbb{X}_n^*$. Again consider $x_n\in S_{\mathbb{X}_n}$ such that 
    \begin{align*}
        \psi_n(x_n)>\|\psi_n\|-\frac{\epsilon}{2^\frac{n}{p}}.
    \end{align*}
    Then for any $\{c_n\}_{n\in\mathbb{N}}\in\ell_p$, we have 
    \begin{align*}
        \sum\limits_{n=1}^\infty\|\psi_n\||c_n|<\psi\left(\sum\limits_{n=1}^\infty| c_n|x_ne_n\right)+\epsilon\|\{c_n\}_{n\in\mathbb{N}}\|_p<\infty.
    \end{align*}
    Hence $\{\|\psi_n\|\}_{n\in\mathbb{N}}\in\ell_q$. The proofs for the $p=1,0$ cases follow similarly.
\end{proof}

\section{geometry of $\ell_p$ direct sums}
Again, let $\mathbb{X}_n$ be a given sequence of non-trivial (of dimension more than 0) normed spaces. Since our aim is to characterize the smoothness and approximate smoothness, we begin by characterizing the support functionals of a non-zero element of $\bigoplus\limits_p\mathbb{X}_n$. We begin with the $p\in(1,\infty)$ case first and deal with the other cases later.
\begin{theorem}\label{support}
    Let $p\in(1,\infty)$ and let $q$ be its conjugate. Then for any non-zero $x=\{x_n\}_{n\in\mathbb{N}}\in\bigoplus\limits_p\mathbb{X}_n$, and $f=\{f_n\}_{n\in\mathbb{N}}\in\bigoplus\limits_q\mathbb{X}_n^*$, $f\in J(x)$ if and only if 
    \begin{align}\label{supporteq}
        \frac{\|x\|_p^{p-1}}{\|x_n\|^{p-1}}f_n\in J\left(x_n\right),
    \end{align}
    if $x_n\neq0$ and $f_n=0$ otherwise.
\end{theorem}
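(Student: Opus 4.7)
The plan is to exploit the identification $\left(\bigoplus_p \mathbb{X}_n\right)^* \cong \bigoplus_q \mathbb{X}_n^*$ from the previous theorem together with the equality cases of (i) the scalar Hölder inequality applied to $\{\|f_n\|\}\in\ell_q$ and $\{\|x_n\|\}\in\ell_p$, and (ii) the coordinatewise bound $|f_n(x_n)|\leq\|f_n\|\|x_n\|$. Concretely, I would first record the chain
\[|f(x)| = \left|\sum_{n=1}^\infty f_n(x_n)\right| \leq \sum_{n=1}^\infty |f_n(x_n)| \leq \sum_{n=1}^\infty \|f_n\|\|x_n\| \leq \|f\|_q\,\|x\|_p,\]
and note that $f\in J(x)$ is equivalent to $\|f\|_q=1$ together with equality throughout this chain.

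For the forward direction, equality in the outermost (scalar Hölder) step forces $\|f_n\|^q$ to be proportional to $\|x_n\|^p$; combining this with $\|f\|_q=1$ and $\|x\|_p=\left(\sum \|x_n\|^p\right)^{1/p}$, together with the conjugacy identity $q(p-1)=p$, pins down $\|f_n\|=\|x_n\|^{p-1}/\|x\|_p^{p-1}$. In particular $f_n=0$ whenever $x_n=0$. Equality in the middle step gives $f_n(x_n)=\|f_n\|\|x_n\|$ coordinatewise (which also handles the first inequality, since it forces $f_n(x_n)$ to be a non-negative real in the complex setting). Hence when $x_n\neq 0$, the rescaled functional $g_n:=(\|x\|_p^{p-1}/\|x_n\|^{p-1})f_n$ satisfies $\|g_n\|=1$ and $g_n(x_n)=\|x_n\|$, i.e.\ $g_n\in J(x_n)$, which is precisely \eqref{supporteq}.

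For the converse I would simply verify that the hypothesis forces $\|f\|_q=1$ and $f(x)=\|x\|_p$: from $g_n\in J(x_n)$ one reads off $\|f_n\|=\|x_n\|^{p-1}/\|x\|_p^{p-1}$ and $f_n(x_n)=\|x_n\|^p/\|x\|_p^{p-1}$ for $x_n\neq 0$, and then the identity $q(p-1)=p$ gives
\[\|f\|_q^q = \frac{1}{\|x\|_p^{q(p-1)}}\sum_{n=1}^\infty \|x_n\|^{q(p-1)} = \frac{1}{\|x\|_p^p}\sum_{n=1}^\infty \|x_n\|^p = 1,\qquad f(x) = \frac{1}{\|x\|_p^{p-1}}\sum_{n=1}^\infty \|x_n\|^p = \|x\|_p.\]
The main technical obstacle I anticipate is careful bookkeeping in the equality case of Hölder when some coordinates $x_n$ vanish: one must argue that the proportionality constant is uniquely forced by the normalizations, so that $\|f_n\|=0$ exactly where $\|x_n\|=0$, ensuring the two-branched formulation in \eqref{supporteq} covers every coordinate.
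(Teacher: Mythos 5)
Your proposal is correct and takes essentially the same route as the paper: the necessity is obtained from the same chain $f(x)\le\sum\|f_n\|\,\|x_n\|\le\|f\|_q\|x\|_p$, with equality in the middle step giving $f_n(x_n)=\|f_n\|\,\|x_n\|$ and the Hölder equality condition giving $\|f_n\|=\|x_n\|^{p-1}/\|x\|_p^{p-1}$ (hence $f_n=0$ when $x_n=0$). Your explicit computation for the converse merely fills in what the paper dismisses as ``elementary computations,'' so there is nothing substantively different to flag.
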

\begin{proof}
    The sufficiency follows from elementary computations. For the necessity, observe that
    \begin{align*}
        \|x\|_p=f(x)=\sum\limits_{n=1}^\infty f_nx_n&\leq\sum\limits_{n=1}^\infty\|f_n\|\|x_n\|\\
        &\leq\|\left(\sum\limits_{n=1}^\infty \|f_n\|^q\right)^\frac{1}{q}\left(\sum\limits_{n=1}^\infty x_n\|^p\right)^\frac{1}{p}=\|f\|_q\|x\|_p.
    \end{align*}
    Hence, equality must hold in the two above inequalities. From the first inequality, we get
    \[f_n(x_n)=\|f_n\|\|x_n\|,\]
    for every $n\in\mathbb{N}$. Hence $\frac{f_n}{\|f_n\|}\in J(x_n)$ if $x_n\neq0$. Also, from the second inequality, by the condition of equality in Holder's inequality, we get:
    \begin{align*}
        \frac{\|f_n\|^q}{\|x_n\|^p}=\frac{\|f\|_q^q}{\|x\|_p^p}=\frac{1}{\|x\|_p^p}~\Rightarrow~\|f_n\|=\frac{\|x_n\|^{p-1}}{\|x\|^{p-1}},
    \end{align*}
    for every $n\in\mathbb{N}$. Combining the two results yields \eqref{supporteq}.
\end{proof}
Recall that for any non-zero $x$ in a normed space, $D(x):=\diam(J(x))$. We now find $D(x)$ for any $x\in\bigoplus\limits_p\mathbb{X}_n$.
\begin{theorem}\label{size}
    Let $p\in(1,\infty)$. For $x=\{x_n\}_{n\in\mathbb{N}}\in\bigoplus\limits_p\mathbb{X}_n$, 
    \begin{align}\label{sizeeq}
        D(x)=\left(\sum\limits_{n=1}^\infty\frac{\|x_n\|^p}{\|x\|_p^p}(D(x_n))^q\right)^\frac{1}{q}.
    \end{align}
\end{theorem}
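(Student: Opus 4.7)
The plan is to use the parameterization of $J(x)$ furnished by Theorem \ref{support} and reduce the diameter computation to a coordinatewise optimization. By that theorem, every $f = \{f_n\}_{n\in\mathbb{N}} \in J(x)$ has the form $f_n = \frac{\|x_n\|^{p-1}}{\|x\|_p^{p-1}} g_n$ with $g_n \in J(x_n)$ whenever $x_n \neq 0$, and $f_n = 0$ otherwise; moreover every such choice of $(g_n)$ gives an element of $J(x)$. So $J(x)$ is in bijection with $\prod_{n:\,x_n \neq 0} J(x_n)$.

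Given two support functionals $f, f' \in J(x)$ corresponding to $(g_n)$ and $(g_n')$, a direct computation using $q(p-1) = p$ yields
\[ \|f - f'\|_q^q = \sum_{n:\,x_n \neq 0} \left(\frac{\|x_n\|^{p-1}}{\|x\|_p^{p-1}}\right)^{\!q} \|g_n - g_n'\|^q = \sum_{n:\,x_n \neq 0} \frac{\|x_n\|^p}{\|x\|_p^p}\,\|g_n - g_n'\|^q. \]
Hence $D(x)^q$ is the supremum of this expression over independent choices $(g_n), (g_n') \in \prod J(x_n)$. The upper bound $D(x)^q \leq \sum \frac{\|x_n\|^p}{\|x\|_p^p} D(x_n)^q$ is then immediate from $\|g_n - g_n'\| \leq D(x_n)$ coordinatewise.

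For the matching lower bound I would use a truncation argument: since $D(x_n) \leq 2$ and $\sum_n \frac{\|x_n\|^p}{\|x\|_p^p} = 1$, given $\epsilon > 0$ I can pick $N$ so large that the tail $\sum_{n > N} \frac{\|x_n\|^p}{\|x\|_p^p} D(x_n)^q$ is smaller than $\epsilon/2$. For each $n \leq N$ with $x_n \neq 0$, choose $g_n, g_n' \in J(x_n)$ with $\|g_n - g_n'\|^q > D(x_n)^q - \epsilon/(2N)$; for remaining indices take $g_n = g_n'$ arbitrarily in $J(x_n)$. The resulting $f, f' \in J(x)$ then satisfy $\|f - f'\|_q^q > \sum_n \frac{\|x_n\|^p}{\|x\|_p^p} D(x_n)^q - \epsilon$, giving the reverse inequality. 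The only subtlety — and the closest thing to an obstacle — is the legitimacy of this interchange of supremum and infinite sum, which the truncation handles cleanly once one notes that the $x_n = 0$ coordinates contribute nothing to either side of \eqref{sizeeq}.
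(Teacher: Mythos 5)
Your proposal is correct and follows essentially the same route as the paper: parameterize $J(x)$ via Theorem \ref{support}, get the upper bound coordinatewise from $\|g_n-g_n'\|\leq D(x_n)$, and get the lower bound by assembling near-optimal support functionals of the $x_n$. The only (immaterial) difference is that you handle the infinite sum by truncation, whereas the paper avoids truncation by taking a uniform $\epsilon$-deficiency per coordinate and using that the weights $\frac{\|x_n\|^p}{\|x\|_p^p}$ sum to $1$.
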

\begin{proof}
    Let $f^1=\{f^1_n\}_{n\in\mathbb{N}},f^2=\{f^2_n\}_{n\in\mathbb{N}}\in J(x)$. Then $\frac{\|x\|_p^{p-1}}{\|x_n\|^{p-1}}f^i_n\in J(x_n)$ whenever $x_n\neq 0$ for $i=1,2$. Hence
    \[\|f^1_n-f^2_n\|\leq \frac{\|x_n\|^{p-1}}{\|x\|_p^{p-1}}D(x_n).\]
    Hence
    \begin{align*}
        \|f^1-f^2\|_q^q=\sum\limits_{n=1}^\infty\|f_n^1-f_n^2\|^q\leq\sum\limits_{n=1}^\infty\frac{\|x_n\|^p}{\|x\|_p^p}(D(x_n))^q.
    \end{align*}
    Taking supremum over $f^1,f^2\in J(x)$ now yields:
    \begin{align*}
        D(x)\leq\left(\sum\limits_{n=1}^\infty\frac{\|x_n\|^p}{\|x\|_p^p}(D(x_n))^q\right)^\frac{1}{q}.
    \end{align*}
    Again, fix $\epsilon>0$. Find $f^1_n,f^2_n\in J(x_n)$ such that
    \[\|f^1_n-f^2_n\|^q>(D(x_n))^q-\epsilon.\]
    Set $f^i=\left\{\frac{\|x_n\|^{p-1}}{\|x\|_p^{p-1}}f^i_n\right\}$ for $i=1,2$. Hence we have $f^1,f^2\in J(x)$. However, 
    \begin{align*}
        \|f^1-f^2\|^q=\sum\limits_{n=1}^\infty\frac{\|x_n\|^p}{\|x\|_p^p}\|f^1_n-f^2_n\|^q\leq\left(\sum\limits_{n=1}^\infty\frac{\|x_n\|^p}{\|x\|_p^p}(D(x_n))^q\right)-\epsilon.
    \end{align*}
    Since $\epsilon>0$ is arbitrary, \eqref{sizeeq} follows.
\end{proof}
\begin{cor}
    A non-zero element $x=\{x_n\}_{n\in\mathbb{N}}\in\bigoplus\limits_p\mathbb{X}_n$ is smooth if and only if $x_n\in\mathbb{X}_n$ is smooth whenever $x_n\neq0$.
\end{cor}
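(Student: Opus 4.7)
The plan is to derive this corollary directly from Theorem \ref{size}, since the formula there reduces smoothness of $x$ to a statement about each nonzero coordinate $x_n$. Recall that by definition a non-zero element is smooth precisely when its diameter $D(\cdot)$ vanishes, so the task is to show $D(x)=0$ if and only if $D(x_n)=0$ for every index $n$ with $x_n\neq 0$.

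First I would invoke Theorem \ref{size} to write
\[
D(x)^q=\sum_{n=1}^\infty\frac{\|x_n\|^p}{\|x\|_p^p}\bigl(D(x_n)\bigr)^q,
\]
which is a sum of non-negative terms. Thus $D(x)=0$ if and only if every summand vanishes. For each $n$, the coefficient $\|x_n\|^p/\|x\|_p^p$ is strictly positive exactly when $x_n\neq 0$, so the vanishing of the $n$-th summand is equivalent to $D(x_n)=0$ whenever $x_n\neq 0$, and is automatic when $x_n=0$ (in which case smoothness of $x_n$ is not asserted anyway, since $x_n=0$).

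Translating this back via the definition, $D(x)=0$ amounts to $x$ being smooth in $\bigoplus_p\mathbb{X}_n$, while $D(x_n)=0$ for each non-zero $x_n$ is precisely the statement that every non-zero coordinate of $x$ is smooth in its respective $\mathbb{X}_n$. There is no real obstacle here; the only minor point worth flagging is the convention that smoothness is defined only for non-zero points, which is why the statement restricts attention to indices $n$ with $x_n\neq 0$. The proof is therefore essentially a one-line consequence of the equivalence between the vanishing of a non-negative series and the vanishing of each of its terms, applied to the formula in Theorem \ref{size}.
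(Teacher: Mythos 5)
Your proof is correct and follows exactly the route the paper intends: the corollary is stated as an immediate consequence of Theorem \ref{size}, and your observation that the non-negative series for $D(x)^q$ vanishes if and only if each term $\frac{\|x_n\|^p}{\|x\|_p^p}(D(x_n))^q$ vanishes (the coefficient being positive precisely when $x_n\neq0$) is the whole argument. Nothing is missing.
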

Recall that $\mathcal{D}(\mathbb{X})$ is defined to be $\sup\{D(x):x\in\mathbb{X}\setminus\{0\}\}$. We can, therefore, conclude:
\begin{cor}
    Given normed linear spaces $\mathbb{X}_n$, 
    \begin{align*}
        \mathcal{D}\left(\bigoplus\limits_p\mathbb{X}_n\right)=\sup\limits_{n=1}^\infty\mathcal{D}(\mathbb{X}_n).
    \end{align*}
\end{cor}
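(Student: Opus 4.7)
The plan is to deduce the corollary directly from Theorem \ref{size}, the formula $D(x)^q=\sum_{n}\frac{\|x_n\|^p}{\|x\|_p^p}(D(x_n))^q$, by proving each of the two inequalities separately. The key observation is that the coefficients $\lambda_n:=\|x_n\|^p/\|x\|_p^p$ are non-negative and, restricted to indices with $x_n\neq 0$, sum to $1$, so the right-hand side is a (countable) convex combination of the quantities $(D(x_n))^q$.

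For the inequality $\mathcal{D}(\bigoplus_p \mathbb{X}_n)\leq\sup_n\mathcal{D}(\mathbb{X}_n)$, I would fix an arbitrary non-zero $x=\{x_n\}\in\bigoplus_p\mathbb{X}_n$ and bound $(D(x_n))^q\leq(\mathcal{D}(\mathbb{X}_n))^q\leq(\sup_k\mathcal{D}(\mathbb{X}_k))^q$ term by term in Theorem \ref{size}, interpreting the summand as $0$ when $x_n=0$ (since then $f_n=0$ forces no contribution by Theorem \ref{support}). Pulling the supremum out of the convex combination and taking $q$-th roots yields $D(x)\leq\sup_n\mathcal{D}(\mathbb{X}_n)$, and then taking the supremum over $x$ gives the desired bound.

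For the reverse inequality, I would use the canonical isometric embedding of $\mathbb{X}_n$ into $\bigoplus_p\mathbb{X}_n$ via $y\mapsto y e_n$ (the sequence with $y$ in the $n$-th slot, zero elsewhere). For fixed $n$ and $\epsilon>0$, pick $y\in\mathbb{X}_n\setminus\{0\}$ with $D(y)>\mathcal{D}(\mathbb{X}_n)-\epsilon$, and apply Theorem \ref{size} to $x:=ye_n$. All but one term in the sum vanish and the weight on the surviving term is $1$, so the formula collapses to $D(x)=D(y)>\mathcal{D}(\mathbb{X}_n)-\epsilon$. Taking $\epsilon\to 0$ and then the supremum over $n$ gives $\mathcal{D}(\bigoplus_p\mathbb{X}_n)\geq\sup_n\mathcal{D}(\mathbb{X}_n)$.

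There is no real obstacle here; the only subtlety worth a sentence is justifying that the indices with $x_n=0$ may be ignored in the sum (so that $\mathcal{D}(\mathbb{X}_n)$ need not be defined to include a zero "point") and that the formula of Theorem \ref{size} is indeed a convex combination, which makes the supremum bound immediate rather than requiring Hölder-type manipulations.
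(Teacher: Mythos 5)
Your proposal is correct and is essentially the argument the paper intends: the corollary is stated as an immediate consequence of Theorem \ref{size}, and your two-sided argument (bounding the convex combination with weights $\|x_n\|^p/\|x\|_p^p$ by the supremum, and realizing the supremum via the isometric embeddings $y\mapsto ye_n$) is exactly how that deduction goes. The remark about ignoring indices with $x_n=0$ is a reasonable point of care but not a departure from the paper's route.
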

Note that this answers the question raised by Chmlie\'nski, Khurana, and Sain in \cite{approx}:\\
\textit{``Is there a Banach space $\mathbb{X}$ such that $D(x)<2$ for every non-zero element $x$ but $\mathcal{D}(\mathbb{X})=2?$"}\\
It was shown in \cite{approx} that no finite-dimensional Banach space has this property. Here, we show the existence of an infinite dimensional Banach space having this property:
\begin{prop}
    Let $\mathbb{X}_n$ be a sequence of Banach spaces such that $\mathcal{D}(\mathbb{X}_n)=2-\frac{1}{n}$. (For example, we can choose $\mathbb{X}_n$ to be suitable two-dimensional polygonal spaces as was shown in \cite{approx}.) Then for any $p\in(1,\infty)$, for every $x\in\bigoplus\limits_p\mathbb{X}_n$, $D(x)<2$ but $\mathcal{D}\left(\bigoplus\limits_p\mathbb{X}_n\right)=2$.
\end{prop}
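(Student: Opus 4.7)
The plan is to read both claims off directly from the two results just established, namely the size formula $D(x)^q=\sum_{n}\frac{\|x_n\|^p}{\|x\|_p^p}(D(x_n))^q$ from Theorem \ref{size} and its corollary $\mathcal{D}\left(\bigoplus_p\mathbb{X}_n\right)=\sup_n\mathcal{D}(\mathbb{X}_n)$. The equality $\mathcal{D}\left(\bigoplus_p\mathbb{X}_n\right)=2$ is then immediate, since $\sup_n\mathcal{D}(\mathbb{X}_n)=\sup_n(2-\tfrac{1}{n})=2$. So the substance of the proposition is the pointwise statement $D(x)<2$ for every nonzero $x$.

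For that, fix a nonzero $x=\{x_n\}\in\bigoplus_p\mathbb{X}_n$ and set the weights $\alpha_n:=\|x_n\|^p/\|x\|_p^p$. These weights are nonnegative and sum to $1$, and the subset $S:=\{n:x_n\neq0\}$ is nonempty since $x\neq 0$. Using the hypothesis $\mathcal{D}(\mathbb{X}_n)=2-\tfrac{1}{n}$, we have $D(x_n)\leq 2-\tfrac{1}{n}$ for every $n\in S$; terms with $n\notin S$ contribute $0$ to the formula in Theorem \ref{size} because $\alpha_n=0$. Hence
\[
D(x)^q=\sum_{n\in S}\alpha_n (D(x_n))^q\leq\sum_{n\in S}\alpha_n\!\left(2-\tfrac{1}{n}\right)^q.
\]
Comparing with $2^q=\sum_{n\in S}\alpha_n\cdot 2^q$, this gives
\[
D(x)^q-2^q\leq\sum_{n\in S}\alpha_n\!\left[\left(2-\tfrac{1}{n}\right)^q-2^q\right].
\]
Each bracketed term is strictly negative and each $\alpha_n$ with $n\in S$ is strictly positive, so the whole sum is strictly negative. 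Therefore $D(x)^q<2^q$, which yields $D(x)<2$, as required.

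There is no real obstacle here; the only subtle point is making sure that the inequality $D(x)<2$ is \emph{strict}, which depends on two facts that both need to be checked: that $S$ is nonempty (true since $x\neq 0$), and that $(2-\tfrac{1}{n})^q<2^q$ strictly for every $n$ (true since $q\in(1,\infty)$ and $2-\tfrac{1}{n}<2$). Once both are in hand, the argument is a direct convex-combination comparison against the constant sequence $2^q$, and the two displayed lines above together with the $\mathcal{D}$-supremum corollary complete the proof.
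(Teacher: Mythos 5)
Your proof is correct and is essentially the argument the paper intends: the paper states this proposition without proof as an immediate consequence of Theorem \ref{size} and the corollary $\mathcal{D}\bigl(\bigoplus_p\mathbb{X}_n\bigr)=\sup_n\mathcal{D}(\mathbb{X}_n)$, which is exactly what you use. Your convex-combination comparison, noting that the weights on the nonzero coordinates are strictly positive and $(2-\tfrac{1}{n})^q<2^q$, correctly supplies the strictness of $D(x)<2$ that the paper leaves implicit.
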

We finish this section by characterizing the support functionals of elements of $\bigoplus\limits_1\mathbb{X}_n$ and $\bigoplus\limits_0\mathbb{X}_n$.
\begin{theorem}\label{support10}
    For $x=\{x_n\}_{n\in\mathbb{N}}\in\bigoplus\limits_1\mathbb{X}_n$, $f=\{f_n\}_{n\in\mathbb{N}}\in\bigoplus\limits_\infty\mathbb{X}_n^*$ is a support functional of $x$ if and only if $f_n\in J(x_n)$ if $x_n\neq0$ and $\|f_n\|\leq1$ if $x_n=0$.\par
    For $x=\{x_n\}_{n\in\mathbb{N}}\in\bigoplus\limits_0\mathbb{X}_n$, $f=\{f_n\}_{n\in\mathbb{N}}\in\bigoplus\limits_1\mathbb{X}_n^*$ is a support functional of $x$ if and only if $f_n=\lambda_nJ(x_n)$ if $\|x_n\|=\|x\|_0$ and $f_n=0$ otherwise such that $\lambda_n>0$, $\sum\limits_{\|x_n\|=\|x\|_0}\lambda_n=1$.
\end{theorem}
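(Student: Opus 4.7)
The plan is to adapt the equality-in-Hölder strategy of Theorem~\ref{support} to the endpoint pairings $(\ell_1,\ell_\infty)$ and $(c_0,\ell_1)$, where the inequality chain is shorter but the equality conditions fork into several cases depending on whether coordinates vanish.

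For the $\bigoplus_1$ case, sufficiency is a direct computation: if $f_n\in J(x_n)$ whenever $x_n\neq 0$ and $\|f_n\|\leq 1$ otherwise, then $\|f\|_\infty=\sup_n\|f_n\|=1$ (some $x_n\neq 0$ forces this supremum to be attained at $1$) and $f(x)=\sum_n f_n(x_n)=\sum_{x_n\neq 0}\|x_n\|=\|x\|_1$. For necessity, I would start from the chain
\begin{align*}
\|x\|_1=f(x)\leq\sum_{n=1}^\infty |f_n(x_n)|\leq\sum_{n=1}^\infty \|f_n\|\|x_n\|\leq\|f\|_\infty\sum_{n=1}^\infty\|x_n\|=\|x\|_1,
\end{align*}
forcing equality at each step. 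Termwise equality in the middle inequality gives $f_n(x_n)=\|f_n\|\|x_n\|$ for every $n$, and equality in the last gives $(1-\|f_n\|)\|x_n\|=0$, so $\|f_n\|=1$ whenever $x_n\neq 0$; combining these yields $f_n\in J(x_n)$ on the support of $x$, with the remaining $f_n$ unconstrained except by $\|f_n\|\leq 1$.

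For the $\bigoplus_0$ case, I first note that the set $N(x):=\{n:\|x_n\|=\|x\|_0\}$ is finite since $\|x_n\|\to 0$. Sufficiency is immediate: if $f_n=\lambda_n g_n$ with $g_n\in J(x_n)$ for $n\in N(x)$ and $f_n=0$ elsewhere, then $\|f\|_1=\sum_{n\in N(x)}\lambda_n=1$ and $f(x)=\sum_{n\in N(x)}\lambda_n\|x_n\|=\|x\|_0$. For necessity, I would run the analogous chain
\begin{align*}
\|x\|_0=f(x)\leq\sum_{n=1}^\infty\|f_n\|\|x_n\|\leq\|x\|_0\sum_{n=1}^\infty\|f_n\|=\|x\|_0,
\end{align*}
again extracting termwise equalities. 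The middle equality gives $f_n(x_n)=\|f_n\|\|x_n\|$ for each $n$, and the right-hand equality gives $\|f_n\|(\|x\|_0-\|x_n\|)=0$, hence $f_n=0$ whenever $\|x_n\|<\|x\|_0$. On $N(x)$, setting $\lambda_n=\|f_n\|$ and $g_n=f_n/\lambda_n$ (for $\lambda_n>0$), the first equality promotes $g_n$ to an element of $J(x_n)$, while $\sum\lambda_n=\|f\|_1=1$ by hypothesis.

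The main obstacle is purely bookkeeping at degenerate coordinates: in the $\ell_1$ direction one must keep track that $f_n$ on zero coordinates of $x$ is genuinely free (subject only to $\|f_n\|\leq 1$), which is what accounts for the massive non-uniqueness of support functionals of $\ell_1$-type points; in the $c_0$ direction one must allow some $\lambda_n$ on $N(x)$ to vanish and list only the strictly positive ones, matching the convention in the statement. Everything else reduces to the one-dimensional principle that $|\varphi(v)|=\|\varphi\|\|v\|$ with $v\neq 0$ forces $\varphi/\|\varphi\|\in J(v)$.
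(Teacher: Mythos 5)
Your argument is correct and is essentially the paper's own proof, which is left as ``direct computations'': the endpoint H\"older-type chains $\|x\|_1=f(x)\leq\sum_n\|f_n\|\|x_n\|\leq\|f\|_\infty\|x\|_1$ and $\|x\|_0=f(x)\leq\sum_n\|f_n\|\|x_n\|\leq\|x\|_0\|f\|_1$ with termwise equality analysis are exactly the intended route, and your one-dimensional reduction $f_n(x_n)=\|f_n\|\|x_n\|\Rightarrow f_n/\|f_n\|\in J(x_n)$ is the key step in both halves. Your reading of the $\bigoplus_0$ statement (absorbing indices of $N(x)$ with $\lambda_n=0$ into the ``$f_n=0$ otherwise'' clause) is the correct interpretation of the stated convention.
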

\begin{proof}
    The proof follows from direct computations.
\end{proof}
We can now easily compute the value of $D(x)$ for any non-zero  $x\in\bigoplus\limits_1\mathbb{X}_n$. 
\begin{theorem}
    Let $x=\{x_n\}_{n\in\mathbb{N}}\in\bigoplus\limits_1\mathbb{X}_n$ be non-zero. Then
    \begin{align*}
        D(x)=
        \begin{cases}
            \sup\limits_{n=1}^\infty D(x_n),~~\text{if}~x_n\neq0~\text{for every}~n\in\mathbb{N},\\
            2,~~\text{otherwise}.
        \end{cases}
    \end{align*}
    Also for a non-zero $x=\{x_n\}_{n\in\mathbb{N}}\in\bigoplus\limits_0\mathbb{X}_n$,
    \begin{align*}
        D(x)=
        \begin{cases}
            D(x_n)~~\text{if}~\|x_k\|=\|x\|_0~\text{if and only if}~k=n,\\
            2,~\text{otherwise}.
        \end{cases}
    \end{align*}
\end{theorem}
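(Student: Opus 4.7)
The plan is to use Theorem~\ref{support10} to parametrize $J(x)$ in each of the two direct-sum settings, then compute $D(x) = \diam(J(x))$ by a standard two-sided argument: a coordinatewise upper bound for $\|f^1 - f^2\|$, matched by a lower bound constructing nearly extremal pairs $f^1, f^2 \in J(x)$. The dichotomy in each case amounts to whether there is extra freedom to place mass in the functional beyond what is forced by the non-zero coordinates of $x$.

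For the $\ell_1$ direct sum, the dual norm is $\|f\|_\infty = \sup_n \|f_n\|$, and by Theorem~\ref{support10} we have $f_n \in J(x_n)$ when $x_n \neq 0$ and $\|f_n\| \leq 1$ when $x_n = 0$. In the subcase where every $x_n \neq 0$, each $\|f^1_n - f^2_n\| \leq D(x_n)$, so $D(x) \leq \sup_n D(x_n)$; for the reverse, fix $n$ and $\epsilon > 0$, pick $f^1_n, f^2_n \in J(x_n)$ with $\|f^1_n - f^2_n\| > D(x_n) - \epsilon$, and match the remaining coordinates with a common $g_m \in J(x_m)$ (non-empty by Hahn-Banach). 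The resulting $f^1, f^2 \in J(x)$ force $D(x) \geq D(x_n) - \epsilon$, and taking the supremum yields equality. If instead $x_{n_0} = 0$ for some $n_0$, pick $g \in S_{\mathbb{X}_{n_0}^*}$ and set $f^1_{n_0} = g$, $f^2_{n_0} = -g$, matching the rest; then $\|f^1 - f^2\|_\infty = 2$, and combined with the trivial bound $D(x) \leq 2$ we conclude $D(x) = 2$.

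For the $c_0$ direct sum, the dual norm is $\|f\|_1 = \sum_n \|f_n\|$, and Theorem~\ref{support10} says a support functional is supported on the finite set $N := \{n : \|x_n\| = \|x\|_0\}$ as $f_n = \lambda_n g_n$ with $g_n \in J(x_n)$, $\lambda_n \geq 0$ and $\sum_{n \in N} \lambda_n = 1$. When $N = \{n_0\}$ is a singleton, $f_{n_0}$ ranges exactly over $J(x_{n_0})$ and all other coordinates vanish, so $D(x) = D(x_{n_0})$ is immediate. When $|N| \geq 2$, pick distinct $m, n \in N$ and $g_m \in J(x_m), g_n \in J(x_n)$; then the functional $f^1$ concentrated on coordinate $m$ with value $g_m$ and $f^2$ concentrated on coordinate $n$ with value $g_n$ both lie in $J(x)$, and $\|f^1 - f^2\|_1 = \|g_m\| + \|g_n\| = 2$, again matching the trivial upper bound.

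The main obstacle is bookkeeping rather than mathematics: one must verify that each constructed pair genuinely satisfies the support-functional characterization of Theorem~\ref{support10}, and that the filler coordinates chosen to match in the $\ell_1$ case do not inflate the infinity-norm past the target coordinate. Once the two parametrizations of $J(x)$ are in hand, both formulas reduce to direct optimization over these parameter spaces.
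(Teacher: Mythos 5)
Your proposal is correct and follows exactly the route the paper intends: the paper states this theorem without proof, as a direct computation from the parametrization of $J(x)$ in Theorem~\ref{support10}, which is precisely what you carry out (upper bounds coordinatewise, lower bounds by constructing near-extremal pairs, with the dichotomy governed by zero coordinates in the $\ell_1$ case and by the cardinality of the norm-attaining set in the $c_0$ case). Your only deviation is using $\lambda_n\geq 0$ rather than the paper's $\lambda_n>0$ in the $c_0$ parametrization, which is harmless (and arguably the more accurate form), since the one-coordinate functionals you use are genuine support functionals, or can be approximated by ones with strictly positive weights.
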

The characterization of smoothness therefore follows:
\begin{cor}
    A point $x=\{x_n\}_{n\in\mathbb{N}}\in\bigoplus\limits_1\mathbb{X}_n$ is smooth if and only if $x_n\in\mathbb{X}_n$ is smooth for every $n\in\mathbb{N}$.\par
    A point $x=\{x_n\}_{n\in\mathbb{N}}\in\bigoplus\limits_0\mathbb{X}_n$ is smooth if and only if there exists a natural number $n_0$ such that $\|x_n\|<\|x\|_0$ for every $n\neq n_0$ and $x_{n_0}\in\mathbb{X}_{n_0}$ is smooth.
\end{cor}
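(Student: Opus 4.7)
The plan is to derive this corollary as an essentially immediate consequence of the preceding theorem computing $D(x)$, together with the characterization that a non-zero element is smooth precisely when $D(x)=0$. So the whole proof is a case analysis on the formulas already obtained, and no new estimates are needed.

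For the $\bigoplus_1 \mathbb{X}_n$ statement I would argue as follows. Suppose first that $x=\{x_n\}$ is smooth, so $D(x)=0$. If some coordinate $x_{n_0}$ were zero, then by the previous theorem we would land in the second case and get $D(x)=2$, a contradiction. Thus every $x_n$ is non-zero. But then $D(x)=\sup_{n} D(x_n)=0$, which forces $D(x_n)=0$ for each $n$, i.e., each $x_n$ is smooth. Conversely, if every $x_n$ is smooth (hence in particular non-zero), then $\sup_n D(x_n)=0$ and the first case of the theorem gives $D(x)=0$.

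For the $\bigoplus_0 \mathbb{X}_n$ statement the reasoning is analogous. If $D(x)=0$, then we must fall in the first case of the theorem — otherwise $D(x)=2$ — so there is a unique index $n_0$ with $\|x_{n_0}\|=\|x\|_0$, and then $D(x_{n_0})=D(x)=0$, showing $x_{n_0}$ is smooth and the other coordinates have strictly smaller norm. The converse direction is the same implication read backwards: a unique norm-attaining coordinate $n_0$ with $x_{n_0}$ smooth puts us in the first case of the theorem and gives $D(x)=D(x_{n_0})=0$.

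There is no real obstacle here; the only thing to keep in mind is the bookkeeping convention that ``$x_n$ is smooth'' implicitly carries the requirement $x_n\ne 0$, so the ``only if'' directions in both parts automatically yield the non-vanishing (respectively uniqueness-of-maximizer) conditions stated in the corollary. Accordingly I would present the proof compactly as two short paragraphs, one for each of the two direct sums, each reading off the equivalence directly from \eqref{sizeeq}'s $\ell_1$- and $c_0$-analogues established in the preceding theorem.
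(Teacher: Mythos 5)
Your argument is correct and is exactly how the paper intends the corollary to be read: it follows immediately from the preceding theorem's formulas for $D(x)$ in $\bigoplus_1\mathbb{X}_n$ and $\bigoplus_0\mathbb{X}_n$ together with the fact that a non-zero point is smooth if and only if $D(x)=0$. Your handling of the convention that smoothness of $x_n$ presupposes $x_n\neq 0$ (and, in the $c_0$ case, that the norm is attained at a unique index) matches the paper's usage.
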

Also, $\bigoplus\limits_p\mathbb{X}_n$ is not approximately smooth for $p=1,0$.
\begin{cor}
    For any sequence of normed linear spaces $\mathbb{X}_n$, 
    \begin{align*}
        \mathcal{D}\left(\bigoplus\limits_1\mathbb{X}_n\right)=\mathcal{D}\left(\bigoplus\limits_1\mathbb{X}_n\right)=2.
    \end{align*}
\end{cor}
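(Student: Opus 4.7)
The plan is to invoke the preceding theorem, which already computes $D(x)$ explicitly for every non-zero $x\in\bigoplus_1\mathbb{X}_n$ and every non-zero $x\in\bigoplus_0\mathbb{X}_n$. Since every support functional has norm one, $D(x)\leq 2$ for any point in any normed space, so $\mathcal{D}\bigl(\bigoplus_p\mathbb{X}_n\bigr)\leq 2$ trivially for $p=1,0$. The whole content is therefore to exhibit, in each space, a non-zero point $x$ with $D(x)=2$.

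For the $\ell_1$-direct sum I would use the standing assumption that each $\mathbb{X}_n$ is non-trivial: pick any non-zero $y_1\in\mathbb{X}_1$ and form $x=\{x_n\}$ with $x_1=y_1$ and $x_n=0$ for all $n\geq 2$. Then $x\in\bigoplus_1\mathbb{X}_n\setminus\{0\}$ and some coordinates of $x$ vanish, so the preceding theorem puts us in the second case and yields $D(x)=2$. Combined with the upper bound, $\mathcal{D}\bigl(\bigoplus_1\mathbb{X}_n\bigr)=2$.

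For the $c_0$-direct sum I would produce a point whose supremum norm is attained at more than one index. Choose non-zero $y_1\in\mathbb{X}_1$ and $y_2\in\mathbb{X}_2$ and, after rescaling if necessary, assume $\|y_1\|=\|y_2\|$; set $x_1=y_1$, $x_2=y_2$, and $x_n=0$ for $n\geq 3$. Then $x\in\bigoplus_0\mathbb{X}_n\setminus\{0\}$ with $\|x_1\|=\|x_2\|=\|x\|_0$, so the condition ``$\|x_k\|=\|x\|_0$ iff $k=n$'' in the preceding theorem fails, placing us in the ``otherwise'' branch and giving $D(x)=2$. Hence $\mathcal{D}\bigl(\bigoplus_0\mathbb{X}_n\bigr)=2$.

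There is no real obstacle here: the statement is a one-line corollary of the explicit formula for $D(x)$ just established, together with the non-triviality of each summand (which makes both constructions possible). The only minor care required is to interpret the apparent typo so that the two quantities on the left-hand side refer to the $\ell_1$ and $c_0$ direct sums respectively, and to note that the pointwise upper bound $D(x)\leq 2$ is automatic.
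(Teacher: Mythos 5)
Your proof is correct and follows exactly the route the paper intends: the corollary is an immediate consequence of the preceding theorem's formula for $D(x)$, applied to a point with a vanishing coordinate in the $\ell_1$-sum and a point attaining its norm at two indices in the $c_0$-sum, together with the trivial bound $D(x)\leq 2$. Your reading of the misprint (the second sum should be $\bigoplus_0\mathbb{X}_n$) is also the right one.
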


\section{Birkhoff-James orthogonality in $\ell_p$-direct sums}
In this section we characterize Birkhoff-James orthogonality and its pointwise symmetry in $x\in\bigoplus\limits_p\mathbb{X}_n$. As usual, let $\mathbb{X}_n$ be a given sequence of non-trivial (of dimension more than 0) normed linear spaces.
We begin the section by characterizing the extreme points of $J(x)$ for any $x\in\bigoplus\limits_p\mathbb{X}_n$.
\begin{prop}
    Let $x=\{x_n\}_{n\in\mathbb{N}}\in\bigoplus\limits_p\mathbb{X}_n$ and let $f=\{f_n\}_{n\in\mathbb{N}}\in\bigoplus\limits_q\mathbb{X}_n^*$, where $p=[1,\infty)\cup\{0\}$ and $q$ is the conjugate index of $p$. Then
    \begin{enumerate}
        \item If $p\in(1,\infty)$ then $f\in\Ext(J(x))$ if and only if 
        \[\frac{\|x\|_p^{p-1}}{\|x_n\|^{p-1}}f_n\in \Ext(J(x_n)),~~\text{for every}~~x_n\neq0.\]
        \item If $p=1$, then $f\in\Ext(J(x))$ if and only if
        \[f_n\in \Ext(J(x_n))~~\text{if}~~x_n\neq0~~\text{and}~~f_n\in\Ext\left(B_{\mathbb{X}_n}\right)~~\text{otherwise.}\]
        \item If $p=0$, then $f\in\Ext(J(x))$ if and only if there exists $n_0\in\mathbb{N}$ such that:
        \[\|x_{n_0}\|=\|x\|_0,~~f_{n_0}\in\Ext\left(J\left(x_{n_0}\right)\right),~~f_n=0~~\text{for}~~n\neq n_0.\]
    \end{enumerate}
\end{prop}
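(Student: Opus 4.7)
The plan is to rewrite $J(x)$ in each case using the descriptions from Theorems~\ref{support} (for $p \in (1,\infty)$) or~\ref{support10} (for $p = 1, 0$), and then invoke two standard convex-geometric facts: \emph{(a)} an element of a product $\prod_i C_i$ of convex sets is extreme iff each coordinate is extreme in its factor; and \emph{(b)} $\Ext\bigl(\conv\bigcup_i A_i\bigr) \subseteq \bigcup_i \Ext(A_i)$ whenever each $A_i$ is convex.

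For $p \in (1,\infty)$, by Theorem~\ref{support} the rescaling map $\Phi : f \mapsto \bigl(\tfrac{\|x\|_p^{p-1}}{\|x_n\|^{p-1}} f_n\bigr)_{n:\,x_n \neq 0}$ is an affine bijection from $J(x)$ onto the product $\prod_{n:\,x_n \neq 0} J(x_n)$: the scaling factors precisely absorb the equality constraint $\|f_n\| = \|x_n\|^{p-1}/\|x\|_p^{p-1}$ coming from the Holder inequality, and the coordinates with $x_n = 0$ are forced to zero. Being affine and bijective, $\Phi$ sends extreme points to extreme points, so (a) yields (1). Case (2) proceeds identically via Theorem~\ref{support10}, which realizes $J(x) \subseteq \bigoplus_\infty \mathbb{X}_n^*$ as the full product $\prod_n A_n$ with $A_n = J(x_n)$ if $x_n \neq 0$ and $A_n = B_{\mathbb{X}_n^*}$ otherwise; (a) then gives (2).

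Case (3) requires a separate argument. Set $I := \{n : \|x_n\| = \|x\|_0\}$ and let $E_n \subseteq \bigoplus_1 \mathbb{X}_n^*$ denote the coordinate-$n$ embedded copy of $J(x_n)$ (zeros in all other coordinates). Theorem~\ref{support10} identifies $J(x) = \conv\bigcup_{n \in I} E_n$. The forward direction of (3) then follows from (b): an extreme $f \in J(x)$ must lie in some $E_{n_0}$, and since $E_{n_0}$ is a convex subset of $J(x)$, $f$ is automatically extreme in $E_{n_0} \cong J(x_{n_0})$. For the converse I would fix $f$ supported at a single $n_0 \in I$ with $f_{n_0} \in \Ext(J(x_{n_0}))$ and suppose $f = \tfrac12(h + h')$ for some $h, h' \in J(x)$. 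Writing $h_n = \lambda_n g_n$ and $h'_n = \mu_n g'_n$ per Theorem~\ref{support10}, the relation $h_n + h'_n = 0$ for $n \neq n_0$ evaluated at $x_n$ gives $(\lambda_n + \mu_n)\|x_n\| = 0$; since $\|x_n\| = \|x\|_0 > 0$ for $n \in I$ and $\lambda_n, \mu_n \geq 0$, this forces $\lambda_n = \mu_n = 0$ for $n \neq n_0$. Hence $\lambda_{n_0} = \mu_{n_0} = 1$, both $h$ and $h'$ are concentrated at $n_0$, and the extremality of $f_{n_0}$ closes the argument.

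The main obstacle is precisely the reverse direction of case (3): ruling out that an averaging decomposition of $f$ could distribute mass across multiple coordinates in $I$ and then cancel at coordinates $n \neq n_0$. This crucially uses the positivity $\lambda_n \geq 0$ from the support-functional description; without it, cross-coordinate cancellation would be possible. The other two cases reduce to the standard product-of-extreme-points fact and present no real difficulty.
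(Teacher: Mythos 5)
Your proposal is correct and takes essentially the same route as the paper, whose proof consists of deducing the proposition from Theorems \ref{support} and \ref{support10}; you simply make explicit the convex-geometric details (the affine identification of $J(x)$ with a product of the $J(x_n)$ for $p\in(1,\infty)$ and $p=1$, and with the convex hull of the finitely many embedded copies $E_n$, $n\in I$, for $p=0$) that the paper leaves to the reader. The positivity argument you give for the converse in case (3) is exactly the point needed, and it works since $\|x_n\|=\|x\|_0>0$ for all $n\in I$.
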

\begin{proof}
    The results follow from Theorems \ref{support} and \ref{support10}.
\end{proof}
We can now characterize Birkhoff-James orthogonality in these spaces.
\begin{theorem}\label{orthogonality}
    Let $x=\{x_n\}_{n\in\mathbb{N}},~y=\{y_n\}_{n\in\mathbb{N}}\in\bigoplus\limits_p\mathbb{X}_n$. Then
    \begin{enumerate}
        \item If $p\in(1,\infty)$, then $x\perp_B y$ if and only if
        \[0\in\overline{\conv}\left\{\sum\limits_{n=1}^\infty\|x_n\|^{p-1}f_n(y_n):f_n\in\Ext(J(x_n))\right\}.\]
        \item If $p=1$, then $x\perp_B y$ if and only if
        \[\left|\sum\limits_{x_n\neq0}f_n(y_n)\right|\leq\sum\limits_{x_n=0}\|y_n\|,\]
        for some $f_n\in J(x_n)$.
        \item If $p=0$, then $x\perp_By$ if and only if
        \[0\in\overline{\conv}\{f_n(y_n):f_n\in\Ext(J(x_n)),~\|x_n\|=\|x\|_0\}.\]
    \end{enumerate}
\end{theorem}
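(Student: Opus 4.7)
The plan is to apply James's criterion---that $x \perp_B y$ iff $f(y) = 0$ for some $f \in J(x)$---to the support functional characterizations of Theorems \ref{support} and \ref{support10}, and in each case to upgrade the resulting ``exists $f \in J(x)$'' statement into the asserted closed convex hull form using Krein-Milman.

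For (1), by Theorem \ref{support} every $f \in J(x)$ has the form $f_n = (\|x_n\|^{p-1}/\|x\|_p^{p-1})g_n$ with $g_n \in J(x_n)$ whenever $x_n \neq 0$, so $f(y) = 0$ is equivalent to $0 \in S := \{\sum_n \|x_n\|^{p-1} g_n(y_n) : g_n \in J(x_n)\}$. I will show $S = \overline{\conv}(S')$, where $S'$ is the same sum restricted to $g_n \in \Ext(J(x_n))$. Holder's inequality yields the uniform tail bound
\[\sum_{n > N}\|x_n\|^{p-1}|g_n(y_n)| \leq \Bigl(\sum_{n>N}\|x_n\|^p\Bigr)^{(p-1)/p}\Bigl(\sum_{n>N}\|y_n\|^p\Bigr)^{1/p} \xrightarrow{N \to \infty} 0,\]
uniformly in $(g_n)$; combined with weak$^*$ compactness of $\prod_n J(x_n)$ (Tychonoff) and weak$^*$ continuity of each evaluation $g_n \mapsto g_n(y_n)$, this makes $S$ compact and convex. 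At finite level, setting $T_n := \{g(y_n) : g \in J(x_n)\}$, Krein-Milman (together with closedness of convex hulls in $\mathbb{K}$) gives $T_n = \conv\{e(y_n): e \in \Ext(J(x_n))\}$, and Minkowski-summing yields $\sum_{n=1}^N \|x_n\|^{p-1} T_n = \conv\{\sum_{n=1}^N \|x_n\|^{p-1} f_n(y_n): f_n \in \Ext(J(x_n))\}$. Approximating an arbitrary element of $S$ by its $N$-th partial sum and using the tail bound then shows $S \subseteq \overline{\conv}(S')$; the reverse inclusion is free since $S$ is closed and convex.

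Parts (2) and (3) are essentially direct. In (2), Theorem \ref{support10} allows arbitrary $\|f_n\| \leq 1$ when $x_n = 0$; choosing each such $f_n$ proportional to a norming functional of $y_n$ (with a common scalar of modulus at most $1$) shows that the $x_n = 0$ coordinates contribute exactly the disk $\{c \in \mathbb{K} : |c| \leq \sum_{x_n = 0}\|y_n\|\}$, so $f(y) = 0$ is solvable iff $|\sum_{x_n \neq 0} f_n(y_n)| \leq \sum_{x_n = 0}\|y_n\|$ for some $f_n \in J(x_n)$. In (3), since $\|x_n\| \to 0$ and $\|x\|_0 > 0$, only finitely many $n$ satisfy $\|x_n\| = \|x\|_0$; Theorem \ref{support10} parameterizes $f \in J(x)$ by a finite convex combination $\sum \lambda_n g_n$ with $g_n \in J(x_n)$ over these indices, and $f(y) = 0$ becomes $0 \in \conv\{g_n(y_n) : g_n \in J(x_n),~\|x_n\| = \|x\|_0\}$, which Krein-Milman applied to each $J(x_n)$ converts to $\overline{\conv}\{f_n(y_n) : f_n \in \Ext(J(x_n)),~\|x_n\| = \|x\|_0\}$.

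The main obstacle is (1): the infinite sum demands a careful truncation-and-limit argument that simultaneously uses Tychonoff compactness of $\prod_n J(x_n)$, Krein-Milman coordinate-wise, and Holder's uniform tail estimate. The finite-index situations in (2) and (3) avoid this complication and follow essentially by direct inspection of the support functional formulas.
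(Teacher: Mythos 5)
Your proposal is correct, and its overall strategy (James's criterion plus Krein--Milman-type reasoning) matches the paper's, but the technical execution of parts (1) and (3) differs. The paper argues globally: since $J(x)$ is weak$^*$ compact and convex, the scalar set $\{f(y):f\in J(x)\}$ is compact and convex, hence the closed convex hull of its extreme points, and every extreme point of this image lifts to an extreme point of $J(x)$; it then simply plugs in the characterization of $\Ext(J(x))$ from the proposition immediately preceding the theorem (extreme points are coordinatewise extreme for $p\in(1,\infty)$, and concentrated on a single max-attaining index for $p=0$). You instead bypass $\Ext(J(x))$ altogether and work coordinatewise from Theorems \ref{support} and \ref{support10}: Tychonoff compactness of $\prod_n J(x_n)$, Krein--Milman applied in each scalar image $T_n$, Minkowski sums at finite level, and the uniform H\"older tail bound to pass from truncations to the full series. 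What the paper's route buys is brevity, since the extreme-point proposition and the standard fact about extreme points of continuous affine images do all the work; what your route buys is self-containedness and an explicit treatment of the infinite-sum convergence issue that the paper leaves implicit (your truncation step should note that a truncated extreme-point sum is within the tail bound of a genuine element of $S'$, obtained by filling in arbitrary extreme points beyond $N$, but this is routine). Part (2) is handled exactly as in the paper: the triangle-inequality estimate for necessity and the choice $f_n=c\psi_n$, $\psi_n\in J(y_n)$, on the zero coordinates for sufficiency.
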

\begin{proof}
    Since $J(x)$ is weak$^*$ compact and convex, $\{f(y):f\in J(x)\}$ is also a compact and convex subset of the ground field, and therefore, it must be the closed convex hull of its extreme points. However, any extreme point of this set must be an image of an extreme point of $J(x)$, finishing our proof for $p\in(1,\infty)\cup\{0\}$.\par
    For $p=1$, if $x\perp_By$ and $f=\{f_n\}_{n\in\mathbb{N}}\in J(x)$ is such that $f(y)=0$, then
    \[\sum\limits_{n=1}^\infty f_n(y_n)=0~~\Rightarrow~~\left|\sum\limits_{x_n\neq0}f_n(y_n)\right|\leq\left|\sum\limits_{x_n=0}f_n(y_n)\right|\leq\sum\limits_{x_n=0}\|y_n\|.\]
    The converse follows trivially since we can choose a support functional $f=\{f_n\}_{n\in\mathbb{N}}$ of $x$ such that $f(y)=0$ by taking $f_n=c\psi_n$ for $x_n=0$ where $\psi_n\in J(y_n)$ and $c$ a suitable constant.
    
\end{proof}
We also express the $p\in(1,\infty)$ case in terms of semi-inner products.
\begin{cor}\label{siporthgonality}
    For $1<p<\infty$ and $x=\{x_n\}_{n\in\mathbb{N}},~y=\{y_n\}_{n\in\mathbb{N}}\in\bigoplus\limits_p\mathbb{X}_n$, $x\perp_By$ if and only if there exist semi-inner products $[\cdot,\cdot]_n$ on $\mathbb{X}_n$ such that
    \[\sum\limits_{n=1}^\infty\|x_n\|^{p-2}[x_n,y_n]_n=0.\]
\end{cor}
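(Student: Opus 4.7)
The plan is to derive the corollary directly from James's theorem together with the description of $J(x)$ given in Theorem~\ref{support}, translated through the dictionary between support functionals and semi-inner products recorded in the introduction. The key observation is that a semi-inner product $[\cdot,\cdot]_n$ on $\mathbb{X}_n$ is determined by a selector $\Psi_n$ that, at each non-zero class, picks a support functional $g_n \in J(x_n)$, and then
\[
[x_n, y_n]_n = \|x_n\|\, g_n(y_n), \qquad [0, y_n]_n = 0.
\]
Hence for $x_n \neq 0$ one has $\|x_n\|^{p-2}[x_n,y_n]_n = \|x_n\|^{p-1}g_n(y_n)$, while for $x_n = 0$ the whole term is $0$ under the natural convention. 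So the quantity in the statement equals $\sum_n \|x_n\|^{p-1} g_n(y_n)$ for a suitable family $\{g_n\}$ of support functionals of the coordinates of $x$.

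For the forward direction, assume $x \perp_B y$. By James's theorem there exists $f = \{f_n\}_{n\in\mathbb{N}} \in J(x)$ with $f(y) = 0$. Theorem~\ref{support} tells us that $g_n := \frac{\|x\|_p^{p-1}}{\|x_n\|^{p-1}} f_n$ lies in $J(x_n)$ whenever $x_n \neq 0$, while $f_n = 0$ otherwise. Substituting back,
\[
0 = f(y) = \sum_{n=1}^\infty f_n(y_n) = \frac{1}{\|x\|_p^{p-1}}\sum_{x_n \neq 0}\|x_n\|^{p-1}g_n(y_n).
\]
Picking, for each $n$ with $x_n \neq 0$, any semi-inner product on $\mathbb{X}_n$ whose selector at $[x_n]$ equals $g_n$ (the values at other classes being immaterial, and any semi-inner product suffices when $x_n = 0$), the displayed identity gives $\sum_n \|x_n\|^{p-2}[x_n,y_n]_n = 0$.

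Conversely, suppose semi-inner products $[\cdot,\cdot]_n$ are given satisfying $\sum_n \|x_n\|^{p-2}[x_n,y_n]_n = 0$, and let $g_n \in J(x_n)$ be the functional selected at $x_n$ for each $x_n \neq 0$. Set $f_n := \frac{\|x_n\|^{p-1}}{\|x\|_p^{p-1}} g_n$ for $x_n \neq 0$ and $f_n := 0$ otherwise. Using $q(p-1) = p$, a direct computation yields $\sum_n \|f_n\|^q = 1$, so $f := \{f_n\}$ lies in $\bigoplus\limits_q \mathbb{X}_n^*$ with $\|f\|_q = 1$; by Theorem~\ref{support}, $f \in J(x)$. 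The hypothesis translates to $f(y) = 0$, and James's theorem gives $x \perp_B y$.

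The argument is essentially a change of variables, so no step is genuinely hard; the only care needed is bookkeeping the normalizing factor $\|x\|_p^{p-1}$ relating $f_n$ and $g_n$, and interpreting terms with $x_n = 0$ (where $\|x_n\|^{p-2}$ is formally ill-defined for $p < 2$ but is harmlessly multiplied by $[0,y_n]_n = 0$).
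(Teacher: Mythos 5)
Your proposal is correct and follows essentially the intended route: the paper leaves this corollary without an explicit argument, deriving it from the support-functional characterization (Theorem~\ref{support} together with James's theorem, equivalently Theorem~\ref{orthogonality}(1)), and your translation via the dictionary $[x_n,\cdot]_n=\|x_n\|\,g_n(\cdot)$ with $g_n\in J(x_n)$, including the rescaling by $\|x\|_p^{p-1}$ and the convention for the $x_n=0$ terms, is exactly that argument.
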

Let us also note the following fact which will be used later.
\begin{cor}\label{unit rank}
    Let $x=\{x_n\}_{n\in\mathbb{N}}\in\bigoplus_p\mathbb{X}_n$ be such that $x_n=0$ for every $n\neq n_0$. Then for any $y=\{y_n\}_{n\in\mathbb{N}}\in\bigoplus\limits_p\mathbb{X}_n$,
    \begin{enumerate}
        \item If $p\in(1,\infty)$, $x\perp_By$ if and only if $x_{n_0}\perp_By_{n_0}$ and $y\perp_Bx$ if and only if $y_{n_0}\perp_Bx_{n_0}$.
        \item If $p=1$, $x\perp_By$ if and only if \[\inf\{f_{n_0}(y_{n_0}):f_{n_0}\in J(x_{n_0})\}\leq \sum\limits_{n\neq n_0}\|y_n\|.\]
        Also, $y\perp_Bx$ if and only if $y_{n_0}\perp_Bx_{n_0}$.
        \item If $p=0$, $x\perp_By$ if and only if $x_{n_0}\perp_By_{n_0}$ and $y\perp_Bx$ if and only if $\|y_n\|=\|y\|_0$ for some $n\neq n_0$ or $y_{n_0}\perp_Bx_{n_0}$.
    \end{enumerate}
\end{cor}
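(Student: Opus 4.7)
The plan is to apply Theorem~\ref{orthogonality} in each of the three cases, exploiting the fact that $x_n=0$ for every $n\neq n_0$ makes almost every term in the relevant formula vanish. The statement then collapses to a single-coordinate condition involving $x_{n_0}$ and $y_{n_0}$, with small degenerate corrections needed in the $p=1$ and $p=0$ cases.

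For the assertion $x\perp_B y$ in the cases $p\in(1,\infty)$ and $p=0$, Theorem~\ref{orthogonality} writes the condition as $0$ lying in the closed convex hull of the set $\{\sum_{n}\|x_n\|^{p-1}f_n(y_n):f_n\in\Ext(J(x_n))\}$, respectively its $p=0$ analogue restricted to indices with $\|x_n\|=\|x\|_0$. Since $x_n=0$ for $n\neq n_0$, only the $n_0$-th coordinate contributes, and the condition reduces to $0\in\overline{\conv}\{f_{n_0}(y_{n_0}):f_{n_0}\in\Ext(J(x_{n_0}))\}$. By the Krein--Milman theorem applied to the weak$^*$ compact convex set $J(x_{n_0})$ together with James's characterization of $\perp_B$, this is exactly $x_{n_0}\perp_B y_{n_0}$. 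For $p=1$, Theorem~\ref{orthogonality} gives $x\perp_B y$ iff some $f_{n_0}\in J(x_{n_0})$ satisfies $|f_{n_0}(y_{n_0})|\leq\sum_{n\neq n_0}\|y_n\|$, and since $J(x_{n_0})$ is weak$^*$ compact the infimum is attained, so this is the stated infimum bound.

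For $y\perp_B x$, I would apply Theorem~\ref{orthogonality} with the roles of $x$ and $y$ swapped. For $p\in(1,\infty)$, every term indexed by $n\neq n_0$ annihilates $x_n$, and the same Krein--Milman argument yields $y_{n_0}\perp_B x_{n_0}$. For $p=1$: if $y_{n_0}=0$ both sides hold trivially; if $y_{n_0}\neq 0$, the right-hand side of the Theorem~\ref{orthogonality} inequality equals $0$ (since $x_n=0$ for $n\neq n_0$), so we need $g_{n_0}(x_{n_0})=0$ for some $g_{n_0}\in J(y_{n_0})$, i.e., $y_{n_0}\perp_B x_{n_0}$. For $p=0$: if some $n\neq n_0$ satisfies $\|y_n\|=\|y\|_0$, then $g_n(x_n)=0$ automatically belongs to the set under consideration and $y\perp_B x$ holds; otherwise, since the supremum defining $\|y\|_0$ is attained in $\bigoplus_0\mathbb{X}_n$, we must have $\|y_{n_0}\|=\|y\|_0$ and the set collapses to $\{g_{n_0}(x_{n_0}):g_{n_0}\in\Ext(J(y_{n_0}))\}$, so the condition becomes $y_{n_0}\perp_B x_{n_0}$. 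The only delicate point is precisely this $p=0$ analysis, where one must carefully use the attainment of the $\bigoplus_0$ supremum to match the two exhaustive cases of the statement.
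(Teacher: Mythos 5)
Your proposal is correct and follows essentially the same route as the paper, which states this corollary without a separate proof precisely because it is obtained by specializing Theorem~\ref{orthogonality} to an element supported on a single coordinate — exactly what you do, including the two genuinely non-trivial points (attainment of the infimum over the weak$^*$ compact set $J(x_{n_0})$ for $p=1$, and attainment of the maximum defining $\|y\|_0$ for $p=0$). The only caveat is that in the $p=1$ case you implicitly read the stated bound as $\inf\{|f_{n_0}(y_{n_0})|:f_{n_0}\in J(x_{n_0})\}\leq\sum_{n\neq n_0}\|y_n\|$, i.e.\ with absolute values; this is the correct (and surely intended) form of the condition, since without the modulus the inequality can hold trivially when $f_{n_0}(y_{n_0})$ is negative even though $x\not\perp_B y$.
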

We now proceed toward characterizing the pointwise symmetry of Birkhoff-James orthogonality in $\bigoplus\limits_{p}\mathbb{X}_n$. We first characterize the left and right symmetric points for $\bigoplus\limits_1\mathbb{X}_n$, $\bigoplus\limits_2\mathbb{X}_n$ and $\bigoplus\limits_0\mathbb{X}_n$ before turning to the case $p\in(1,\infty)\setminus\{2\}$.
\begin{theorem}
    The space $\bigoplus\limits_1\mathbb{X}_n$ has no non-zero left-symmetric point and $x=\{x_n\}_{n\in\mathbb{N}}$ is a right-symmetric point if and only if there exists $n_0\in\mathbb{N}$ such that
    \[x_n=0~~\text{whenever}~~n\neq n_0~~\text{and}~~x_{n_0}\in\mathbb{X}_{n_0}~~\text{is right-symmetric}.\]
\end{theorem}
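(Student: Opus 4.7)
The plan is to prove both assertions by contradiction, using the orthogonality characterizations in Theorem~\ref{orthogonality}(2) and Corollary~\ref{unit rank}(2). The common theme is that once $y$ is chosen appropriately, both sides of the inequality $\left|\sum_{y_n\neq 0}g_n(x_n)\right|\leq\sum_{y_n=0}\|x_n\|$ (and its transpose) become explicitly computable, because $g_n\in J(cx_n)$ for $c\in\mathbb{R}\setminus\{0\}$ forces $g_n(x_n)=\sgn(c)\|x_n\|$.

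For the first assertion, fix a nonzero $x=\{x_n\}$. When $\mathrm{supp}(x)\neq\mathbb{N}$, I pick $\ell\notin\mathrm{supp}(x)$ and a nonzero $z\in\mathbb{X}_\ell$ with $\|z\|\geq\|x\|_1$, and set $y_n=x_n$ on $\mathrm{supp}(x)$, $y_\ell=z$, and $y_n=0$ elsewhere. Theorem~\ref{orthogonality}(2) gives $x\perp_B y$ (the LHS equals $\|x\|_1\leq\|z\|=$ RHS), while $y\perp_B x$ would demand $\|x\|_1\leq 0$, which fails. When $\mathrm{supp}(x)=\mathbb{N}$ no free index is available; I instead choose $N\in\{1,2\}$ with $s_N:=\sum_{n\leq N}\|x_n\|\neq\|x\|_1/2$ (always possible since $s_1\neq s_2$) and set $y_n=x_n$ for $n\leq N$, $y_n=-\alpha x_n$ for $n>N$, with $\alpha=s_N/(\|x\|_1-s_N)>0$. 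The choice of $\alpha$ is tuned so that $\sum f_n(y_n)=0$ giving $x\perp_B y$; meanwhile the sign flip forces $\sum g_n(x_n)=2s_N-\|x\|_1\neq 0$, contradicting $y\perp_B x$.

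For the right-symmetric characterization, the ($\Leftarrow$) direction is direct: if $x=x_{n_0}e_{n_0}$ and $y\perp_B x$, Corollary~\ref{unit rank}(2) yields $y_{n_0}\perp_B x_{n_0}$, right-symmetry of $x_{n_0}$ gives $x_{n_0}\perp_B y_{n_0}$, and a witnessing $f_{n_0}\in J(x_{n_0})$ verifies $x\perp_B y$ via Theorem~\ref{orthogonality}(2). For ($\Rightarrow$) I first rule out $|\mathrm{supp}(x)|\geq 2$ by choosing $m\in\mathrm{supp}(x)$ with $2\|x_m\|\leq\|x\|_1$ (such $m$ exists by averaging when $\mathrm{supp}(x)$ is finite, and since $\|x_n\|\to 0$ when $\mathrm{supp}(x)=\mathbb{N}$). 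If some $\ell\notin\mathrm{supp}(x)$ is available I take $y_m=x_m$ and $y_\ell\in\mathbb{X}_\ell$ with $0<\|y_\ell\|<\|x_m\|$; otherwise I take $y=x_m e_m$. In either case Theorem~\ref{orthogonality}(2) produces $y\perp_B x$ but not $x\perp_B y$. Finally, once $x=x_{n_0}e_{n_0}$, testing with $y=y_{n_0}e_{n_0}$ and applying Corollary~\ref{unit rank}(2) reduces the two orthogonality relations to $y_{n_0}\perp_B x_{n_0}$ and $x_{n_0}\perp_B y_{n_0}$ respectively, so right-symmetry of $x$ forces right-symmetry of $x_{n_0}$.

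The main obstacle is the full-support subcase of the first assertion, where the natural ``reserve an unused coordinate'' construction is unavailable and must be replaced by a carefully tuned alternating-sign truncation; its success hinges on the numerical fact that at least one of $N\in\{1,2\}$ satisfies $s_N\neq\|x\|_1/2$.
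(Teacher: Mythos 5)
Your proposal is correct and follows essentially the same route as the paper: in each case you build explicit test vectors and decide both orthogonality relations through the $\ell_1$ support-functional criterion of Theorem~\ref{orthogonality}(2) and Corollary~\ref{unit rank}(2), exactly as the paper does. The only differences are cosmetic (your tuned $\alpha$ with $N\in\{1,2\}$ versus the paper's choice of $M$ with unequal partial sums, and your averaging choice of $m$ with the optional extra coordinate $y_\ell$ versus the paper's simpler test $y=x_1e_1$ with $\|x_1\|\leq\|x_2\|$), so no substantive divergence.
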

\begin{proof}
    Let $x$ be a non-zero left symmetric point of $\bigoplus\limits_1\mathbb{X}_n$. If $x_n=0$, find $y\in\mathbb{X}_n$ such that $\|y\|>\|x\|_1$. Then considering $z=\{z_n\}_{n\in\mathbb{N}}\in\bigoplus\limits_{1}\mathbb{X}_n$ given by $z_k=x_k$ if $k\neq n$ and $z_n=y$, we clearly get from Theorem \ref{orthogonality}:
    \[x\perp_Bz,~~z\not\perp_Bx.\]
    Now, if $x_n\neq0$ for every $n$, find $M$ such that
    \begin{align*}
        \sum\limits_{n=1}^M\|x_n\|\neq\sum\limits_{n=M+1}^\infty\|x_n\|.
    \end{align*}
    Consider $z=\{z_n\}_{n\in\mathbb{N}}\in\bigoplus\limits_1\mathbb{X}_n$ given by
    \begin{align*}
        z_n=
        \begin{cases}
            \frac{x_n}{\|x_n\|},~~1\leq n\leq M,\\
            \frac{Mx_n}{2^{n-M}\|x_n\|},~~n>M.
        \end{cases}
    \end{align*}
    Then again by Theorem \ref{orthogonality}, we get $x\perp_Bz$ but $z\not\perp_Bx$.\par
    Again, if $x$ is a right-symmetric point of $\bigoplus\limits_1\mathbb{X}_n$ and $x_1,x_2\neq0$, then without loss of generality, we may assume that $\|x_1\|\leq\|x_2\|$. Now consider $y=\{y_n\}_{n\in\mathbb{N}}\in\bigoplus\limits_1\mathbb{X}_n$ given by $y_1=x_1$ and $y_n=0$ otherwise. Then clearly, $x\not\perp_By$ but $y\perp_Bx$ by Theorem \ref{orthogonality}. Further, if $x$ has only one non-zero component $x_{n_0}$ and $x$ is right-symmetric, then for any $y\in\bigoplus\limits_1\mathbb{X}_n$, with only non-zero component $y_{n_0}$,
    \[x\perp_By~~\Leftrightarrow~~x_n\perp_By_n~~\text{and}~~y\perp_Bx~~\Leftrightarrow~~y_n\perp_Bx_n.\]
    Hence $x_{n_0}$ is right-symmetric. The converse, however, follows trivially from Theorem \ref{orthogonality} and Corollary \ref{unit rank}.
\end{proof}
We now come to the case $p=2$.
\begin{theorem}
    Let $x=\{x_n\}_{n\in\mathbb{N}}\in\bigoplus\limits_2\mathbb{X}_n$. Then
    \begin{enumerate}
        \item $x$ is left-symmetric if and only if $x_n\in\mathbb{X}_n$ is left s.i.p. symmetric (see Definition \ref{sip-commute}) for every $n\in\mathbb{N}$ or there exists $n_0\in\mathbb{N}$ such that $x_n=0$ if $n\neq n_0$ and $x_{n_0}$ is a left-symmetric point of $\mathbb{X}_{n_0}$.
        \item $x$ is right-symmetric if and only if $x_n\in\mathbb{X}_n$ is right s.i.p. symmetric for every $n\in\mathbb{N}$ or there exists $n_0\in\mathbb{N}$ such that $x_n=0$ if $n\neq n_0$ and $x_{n_0}$ is a right-symmetric point of $\mathbb{X}_{n_0}$.
    \end{enumerate}
\end{theorem}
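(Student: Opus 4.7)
The plan is to prove both parts by splitting on the number of non-zero coordinates of $x$, using Corollary \ref{unit rank} when only one coordinate is non-zero and the semi-inner-product characterization in Corollary \ref{siporthgonality} (applied with $p=2$, so the weight $\|x_n\|^{p-2}$ disappears) when at least two coordinates are non-zero. In the single-coordinate case $x=x_{n_0}e_{n_0}$, Corollary \ref{unit rank}(1) gives $x\perp_B y\Leftrightarrow x_{n_0}\perp_B y_{n_0}$ and $y\perp_B x\Leftrightarrow y_{n_0}\perp_B x_{n_0}$ for every $y$, so left- (respectively right-) symmetry of $x$ in $\bigoplus_2\mathbb{X}_n$ is \emph{verbatim} left- (respectively right-) symmetry of $x_{n_0}$ in $\mathbb{X}_{n_0}$.

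For the multi-coordinate case, the ``if'' direction in both (1) and (2) is a direct termwise application of Corollary \ref{siporthgonality}. For (1), suppose each $x_n$ is left s.i.p.\ symmetric and $x\perp_B y$; pick s.i.p.'s $[\cdot,\cdot]_n$ on $\mathbb{X}_n$ with $\sum_n[x_n,y_n]_n=0$, then, using that $x_n$ is left s.i.p.\ commuting with $y_n$ (trivially so whenever $x_n=0$ or $y_n=0$), replace each $[\cdot,\cdot]_n$ by a s.i.p.\ $[\cdot,\cdot]_n'$ with $[y_n,x_n]_n'=\overline{[x_n,y_n]_n}$; conjugating and summing gives $\sum_n[y_n,x_n]_n'=0$, i.e.\ $y\perp_B x$. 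Part (2) is analogous with the roles of $(x,y)$ reversed.

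The heart of the argument, and the main obstacle, is the ``only if'' direction in the multi-coordinate case. The plan is a ``spare coordinate'' construction. Assume $x$ has at least two non-zero coordinates, fix any $n_0\in\mathbb{N}$, any non-zero $y_{n_0}\in\mathbb{X}_{n_0}$, and any s.i.p.\ $[\cdot,\cdot]_{n_0}$ on $\mathbb{X}_{n_0}$; pick an auxiliary index $m\neq n_0$ with $x_m\neq 0$ and fix any s.i.p.\ $[\cdot,\cdot]_m$ on $\mathbb{X}_m$. For part (1), define $y\in\bigoplus_2\mathbb{X}_n$ by setting $y_{n_0}$ as given, $y_m=tx_m$ with $t=-[x_{n_0},y_{n_0}]_{n_0}/\|x_m\|^2$, and $y_n=0$ otherwise. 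Since $[x_m,y_m]_m=t\|x_m\|^2=-[x_{n_0},y_{n_0}]_{n_0}$, Corollary \ref{siporthgonality} yields $x\perp_B y$. Left-symmetry of $x$ then gives $y\perp_B x$, so there exist s.i.p.'s $[\cdot,\cdot]_n'$ with $\sum_n[y_n,x_n]_n'=0$. The key observation is that $[y_m,x_m]_m'=\overline{t}\|x_m\|^2=-\overline{[x_{n_0},y_{n_0}]_{n_0}}$ is forced by the scalar relation $y_m=tx_m$ together with $[x_m,x_m]_m'=\|x_m\|^2$, \emph{independently} of the choice of $[\cdot,\cdot]_m'$. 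Hence the equation reduces to $[y_{n_0},x_{n_0}]_{n_0}'=\overline{[x_{n_0},y_{n_0}]_{n_0}}$, which is exactly the left s.i.p.\ commuting condition we needed to verify for $x_{n_0}$ with $y_{n_0}$.

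Part (2) runs on the same template: adjust $t$ so that $[y_m,x_m]_m=-[y_{n_0},x_{n_0}]_{n_0}$, use Corollary \ref{siporthgonality} to conclude $y\perp_B x$, invoke right-symmetry to get $x\perp_B y$ with new s.i.p.'s $[\cdot,\cdot]_n'$, and note that $[x_m,y_m]_m'=t\|x_m\|^2$ is again forced, yielding $[x_{n_0},y_{n_0}]_{n_0}'=\overline{[y_{n_0},x_{n_0}]_{n_0}}$. Two bookkeeping points must be checked: first, when $[x_{n_0},y_{n_0}]_{n_0}=0$ (respectively $[y_{n_0},x_{n_0}]_{n_0}=0$) the construction degenerates to $y_m=0$, but the desired commuting identity then reads $0=\overline{0}$ and holds trivially; second, the coordinates $x_n=0$ contribute nothing to either sum and are vacuously left/right s.i.p.\ symmetric, so there is no obstruction in declaring ``every $n\in\mathbb{N}$'' in the statement.
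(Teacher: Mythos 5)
Your proposal is correct and follows essentially the same route as the paper: both reduce the single-coordinate case to Corollary \ref{unit rank}, derive sufficiency by conjugating the coordinatewise semi-inner products in Corollary \ref{siporthgonality}, and prove necessity with the identical two-coordinate construction ($y$ supported at $n_0$ and a spare index $m$ with $y_m$ a scalar multiple of $x_m$, whose term $\overline{t}\|x_m\|^2$ is s.i.p.-independent). The only difference is presentational: you argue necessity directly for every coordinate and witness, while the paper phrases it as a proof by contradiction at a single offending coordinate.
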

\begin{proof}
    From Corollary \ref{siporthgonality}, we get that for any $y=\{y_n\}_{n\in\mathbb{N}}\in\bigoplus\limits_2\mathbb{X}_n$, $x\perp_By$ if and only if
    \begin{align}\label{l2ortho}
        \sum\limits_{n=1}^\infty[x_n,y_n]_n=0,
    \end{align}
    for some sequence of semi-inner products $[\cdot,\cdot]_n$ on $\mathbb{X}$. The sufficiency for both the parts now follows from \eqref{l2ortho}. To prove the necessity, we assume a contradiction. We prove the result for the left-symmetric case, and the proof for the right-symmetric case follows similarly. Without loss of generality, we therefore assume that there exist $z_1\in\mathbb{X}_1$ and a semi-inner product $[\cdot,\cdot]_1$ such that 
    \[[x_1,z_1]\neq[y_1,z_1]'~~\text{for every semi-inner product}~[\cdot,\cdot].\]
    If $x_m\neq0$ for some $m\neq 1$ then define $y_{\alpha}=\{y_n\}_{n\in\mathbb{N}}\in\bigoplus\limits_2\mathbb{X}_n$ for $\alpha\in\mathbb{C}$ by
    \[y_n=\begin{cases}
         z_1,~~n=1,\\
        \alpha x_m,~~n=m,\\
        0,~~\text{otherwise}.
    \end{cases}\]
    Clearly, $x\perp_B y_\alpha$ if 
    \begin{align*}
        [x_1,z_1]+\alpha\|x_n\|^2=0.
    \end{align*}
    But $y_\alpha\perp_Bx$ if and only if 
    \begin{align*}
        [z_1,x_1]'+\overline{\alpha}\|x_n\|^2=0,
    \end{align*}
    giving the desired contradiction.
\end{proof}
We now proceed to the case $p=0$.
\begin{theorem}
    The space $\bigoplus\limits_0\mathbb{X}_n$ has no non-zero right-symmetric point. A point $x=\{x_n\}_{n\in\mathbb{N}}\in\bigoplus\limits_0\mathbb{X}_n$ is left symmetric if and only if there exists $n_0\in\mathbb{N}$ such that $x_n=0$ whenever $n\neq n_0$ and $x_{n_0}$ is a left-symmetric point of $\mathbb{X}_{n_0}$.
\end{theorem}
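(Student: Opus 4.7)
The plan is to prove the two assertions separately, using Theorem~\ref{orthogonality}(3) and Corollary~\ref{unit rank}(3). Throughout, for a non-zero $x\in\bigoplus\limits_0\mathbb{X}_n$ write $A:=\{n:\|x_n\|=\|x\|_0\}$, which is finite and non-empty because $\|x_n\|\to0$.

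For the right-symmetric claim, given a non-zero $x$ I will exhibit $y$ with $y\perp_Bx$ but $x\not\perp_By$. The unifying idea is to set $y_n=x_n$ on $A$: then for every $n\in A$ and every $f_n\in\Ext(J(x_n))$ one has $f_n(y_n)=\|x\|_0$, so $\overline{\conv}\{f_n(y_n):n\in A\}=\{\|x\|_0\}\not\ni0$ and hence $x\not\perp_By$ regardless of the remaining coordinates of $y$. I then pad $y$ outside $A$ so that $A_y$ is disjoint from $A$ and so that $0$ lies in $\overline{\conv}\{g(x_n):g\in\Ext(J(y_n)),\,n\in A_y\}$. If some $m\notin A$ has $x_m=0$, I set $y_n=0$ off $A\cup\{m\}$ and take any $y_m$ with $\|y_m\|>\|x\|_0$; then $A_y=\{m\}$ and $g(x_m)=0$ for every $g$, yielding $y\perp_Bx$. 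Otherwise $x_m\neq0$ for every $m\notin A$, and, using that $A$ is finite, I pick distinct $m_1,m_2\notin A$ and set $y_{m_i}:=(-1)^{i-1}cx_{m_i}/\|x_{m_i}\|$ with $c>\|x\|_0$ and $y_n=0$ elsewhere. Then $J(y_{m_1})=J(x_{m_1})$ and $J(y_{m_2})=-J(x_{m_2})$, so the relevant set reduces to $\{\|x_{m_1}\|,-\|x_{m_2}\|\}$, whose convex hull in $\mathbb{K}$ contains $0$.

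For the left-symmetric claim, sufficiency follows from Corollary~\ref{unit rank}(3): if $x$ is supported only at $n_0$ and $x_{n_0}$ is left-symmetric, then $x\perp_By\Leftrightarrow x_{n_0}\perp_By_{n_0}\Rightarrow y_{n_0}\perp_Bx_{n_0}\Rightarrow y\perp_Bx$. For necessity, suppose first that $x$ has at least two non-zero coordinates. If $|A|\geq2$, pick $n_1\in A$ and set $y:=x_{n_1}e_{n_1}$; any other $n\in A$ contributes $f_n(0)=0$ to the set defining $x\perp_By$, so $x\perp_By$, while $A_y=\{n_1\}$ with $J(y_{n_1})=J(x_{n_1})$ forces $\{g(x_{n_1})\}=\{\|x_{n_1}\|\}\not\ni0$ and hence $y\not\perp_Bx$. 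If $A=\{n_0\}$ but $x_m\neq0$ for some $m\neq n_0$, set $y:=(cx_m/\|x_m\|)e_m$ with $c>\|x\|_0$; then $y_{n_0}=0$ forces $x\perp_By$, while the positive scaling of $y_m$ gives $y\not\perp_Bx$ by the same extreme-point computation. Finally, when $x$ is supported only at some $n_0$, testing left-symmetry of $x$ against $y:=ze_{n_0}$ for arbitrary $z\in\mathbb{X}_{n_0}$ with $x_{n_0}\perp_Bz$ and unpacking Corollary~\ref{unit rank}(3) forces $z\perp_Bx_{n_0}$, so $x_{n_0}$ is left-symmetric in $\mathbb{X}_{n_0}$.

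The main technical point is the second sub-case of the right-symmetry argument, where no $m\notin A$ has $x_m=0$: producing $y_m\perp_Bx_m$ inside a single summand may be impossible (for instance when $\dim\mathbb{X}_m=1$), so the proof must spread the obstruction across two indices $m_1,m_2$, relying on the elementary fact that $0$ lies on the segment from $-\|x_{m_2}\|$ to $\|x_{m_1}\|$ in the ground field.
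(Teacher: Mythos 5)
Your proof is correct and follows essentially the same route as the paper: both arguments rest on the extreme-point characterization of Birkhoff--James orthogonality in Theorem~\ref{orthogonality}(3) together with Corollary~\ref{unit rank}(3), and both produce explicit witnesses $y$ with $y\perp_Bx$, $x\not\perp_By$ (resp.\ $x\perp_By$, $y\not\perp_Bx$) by manipulating which coordinates attain the maximum norm and flipping signs there. Your constructions differ only in minor details (keeping $y=x$ on the max-norm set and placing the maximum of $y$ outside it, plus the two-index trick), which in fact handles the degenerate situations such as one-dimensional summands or vanishing coordinates somewhat more carefully than the paper's single-index witnesses.
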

\begin{proof}
    For the right-symmetric part, let $x=\{x_n\}_{n\in\mathbb{N}}\in\bigoplus\limits_0\mathbb{X}_n$ be right-symmetric and $\|x_1\|=\|x\|_0\neq 0$ without loss of generality. Now if $m\neq n$, then if $\|x_m\|<\|x\|_0$, we consider $y=\{y_n\}_{n\in\mathbb{N}}\in\bigoplus\limits_0\mathbb{X}_n$ given by
    \begin{align*}
        y_n=\begin{cases}
            -\frac{x_m}{\|x_m\|},~~\text{if}~n=m,\\
            \frac{x_n}{\|x_n\|},~~\text{if}~\|x_n\|=\|x\|_0,\\
            0,~~\text{otherwise}.
        \end{cases}
    \end{align*}
    Theorem \ref{orthogonality} now yields that $y\perp_Bx$ but $x\not\perp_By$. Hence $\|x_n\|=\|x\|_0$ for every $n\in\mathbb{N}$. Since $x\in\bigoplus\limits_0\mathbb{X}_n$, $x=0$.\par
    The sufficiency for the left-symmetric case follows from Corollary \ref{unit rank}. For the necessity, we assume $\|x_1\|=\|x\|_0$ without loss of generality. Then if $x_m\neq0$ for some $m\neq1$, find $y_1\in\mathbb{X}_1$ and $y_m\in\mathbb{X}_m$ such that $x_1\perp_By_1$, $\|y_1\|=1$ and $y_m\not\perp_Bx_m$, $\|y_m\|=2$. We now define $y=\{y_n\}_{n\in\mathbb{N}}\in\bigoplus\limits_0\mathbb{X}_n$ by setting $y_n=0$ for $n\neq 1,m$ yields $x\perp_By$ but $y\perp_Bx$. Hence $x_m=0$ for every $m>1$. That $x_1$ is left-symmetric again follows from Corollary \ref{unit rank}.    
\end{proof}
We finish the section with the $p\in(1,\infty)\setminus\{2\}$ case.
\begin{theorem}
    Let $x=\{x_n\}_{n\in\mathbb{N}}\in\bigoplus\limits_p\mathbb{X}_n$. Then
    \begin{enumerate}
        \item If $x$ is left-symmetric, either there exists $n_0\in\mathbb{N}$ such that $x_n=0$ for $n\neq n_0$ and $x_{n_0}$ is a left-symmetric point of $\mathbb{X}_n$, or there exist $m,k\in\mathbb{N}$ such that $x_n=0$ if $n\neq m,k$, $\|x_m\|=\|x_k\|$, and $x_m,~x_k$ are smooth $p$-left s.i.p. symmetric points of $\mathbb{X}_m$, $\mathbb{X}_k$ respectively.
        \item If $x$ is right-symmetric, either there exists $n_0\in\mathbb{N}$ such that $x_n=0$ for $n\neq n_0$ and $x_{n_0}$ is a right-symmetric point of $\mathbb{X}_n$, or there exist $m,k\in\mathbb{N}$ such that $x_n=0$ if $n\neq m,k$, $\|x_m\|=\|x_k\|$, and $x_m,~x_k$ are $p$-right s.i.p. symmetric points of $\mathbb{X}_m$, $\mathbb{X}_k$ respectively.
    \end{enumerate}
\end{theorem}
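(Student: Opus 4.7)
The plan is to deduce both parts from Corollary \ref{siporthgonality} using test vectors $y$ whose nonzero components are scalar multiples of the corresponding $x_n$, for which the semi-inner-product values become fully determined and the orthogonality conditions reduce to explicit scalar identities. The single algebraic input separating $p=2$ from $p\ne 2$ is that the map $\zeta\mapsto|\zeta|^{p-2}\overline\zeta$ on $\mathbb K$ is a bijection which fails to be linear (and so distorts the affine relations on the $x$-side versus on the $y$-side) precisely when $p\ne 2$.

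To carry out part (1), I first restrict the support of a left-symmetric $x$. Suppose $x$ has two nonzero components $x_i,x_j$ with $\|x_i\|\ne\|x_j\|$; setting $y_i=x_i$, $y_j=-(\|x_i\|/\|x_j\|)^p x_j$, and all other $y_n=0$, a direct computation yields $x\perp_B y$, while the $y\perp_B x$ condition reduces to $\|x_i\|^p(1-(\|x_i\|/\|x_j\|)^{p(p-2)})=0$, impossible for $p\ne 2$. So all nonzero components of a left-symmetric $x$ share a common norm. If three nonzero components $x_i,x_j,x_k$ exist (with common norm $c$), take $y=x_i e_i+x_j e_j-2x_k e_k$: this gives $x\perp_B y$ via $c^p(1+1-2)=0$, whereas $y\perp_B x$ would demand $c^p(2-2^{p-1})=0$, again impossible. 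Hence $x$ has at most two nonzero components, and if two, they share a common norm; the one-component subcase reduces via Corollary \ref{unit rank} to left-symmetry of $x_{n_0}$ in $\mathbb X_{n_0}$.

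For the two-component subcase $\|x_m\|=\|x_k\|$, fix $y_m\in\mathbb X_m$ and any semi-inner product on $\mathbb X_m$; set $c:=[x_m,y_m]$, $y_k:=-\frac{c}{\|x_k\|^2}x_k$, and $y:=y_me_m+y_ke_k$. Then $x\perp_B y$, and left-symmetry forces $y\perp_B x$; since $y_k$ is a multiple of $x_k$ the quantity $\|y_k\|^{p-2}[y_k,x_k]$ is uniquely $-|c|^{p-2}\overline c/\|x_m\|^{p-2}$, so $[y_m,x_m]'$ must equal $|c|^{p-2}\overline c/(\|y_m\|^{p-2}\|x_m\|^{p-2})$, which is exactly the $p$-left s.i.p. commuting identity of Definition \ref{sip-commute}. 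Varying $y_m$ and the initial s.i.p. shows $x_m$ is $p$-left s.i.p. symmetric; $x_k$ is treated by the mirror argument. For the smoothness clause, suppose $g_1\ne g_2$ are support functionals of $x_m/\|x_m\|$ and pick $y_m$ with $g_1(y_m)\ne g_2(y_m)$, so that $a:=[x_m,y_m]_{1}\ne[x_m,y_m]_{2}=:b$. Running the above construction for $c=a$ and $c=b$ forces two distinct values of $[y_m,x_m]'$ by injectivity of $\zeta\mapsto|\zeta|^{p-2}\overline\zeta$ when $p\ne 2$; this contradicts smoothness of any $y_m$ chosen smooth outside $\ker(g_1-g_2)$. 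Part (2) follows by the same template with the roles of the $x$- and $y$-side s.i.p.'s interchanged; the parallel computation yields $p$-right s.i.p. commuting, but no smoothness is forced because the ambiguity now sits on the $y$-side which is already pinned as a scalar multiple of $x_k$.

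The main obstacle is the smoothness clause in part (1): while the algebraic separation of two s.i.p. values via $\zeta\mapsto|\zeta|^{p-2}\overline\zeta$ is automatic for $p\ne 2$, converting this into smoothness of $x_m$ requires a smooth $y_m\in\mathbb X_m$ outside $\ker(g_1-g_2)$, which is standard in the separable setting (Mazur's theorem gives density of smooth points). The asymmetry between (1) and (2) --- smoothness required on one side but not the other --- ultimately stems from which factor of $y$ is chosen proportional to $x_k$ in the reduction.
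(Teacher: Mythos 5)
Your proposal is correct and follows essentially the same route as the paper: the same test vectors built from scalar multiples of the components (via Corollary \ref{siporthgonality}) force equal norms and at most two nonzero components when $p\neq2$, and the same two-component construction with $y_k$ proportional to $x_k$ yields the $p$-left (resp.\ $p$-right) s.i.p. identity, with smoothness of $x_m$ extracted from the uniqueness of $[y_m,x_m]'$ at a smooth $y_m$ together with injectivity of $\zeta\mapsto|\zeta|^{p-2}\overline{\zeta}$. Your smoothness step is merely the contrapositive packaging of the paper's ``uniquely determined on a spanning set'' argument, and your explicit caveat about density of smooth points (Mazur, separability) is a point the paper itself relies on silently.
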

\begin{proof}
The sufficiency in both the cases follows directly from computation. We establish the necessity for the left-symmetric case and the proof for the right-symmetric case follows in a similar way. \par
Observe that by Corollary \ref{unit rank}, if $x$ is left-symmetric, $x_n\in\mathbb{X}_n$ is left-symmetric for every $n\in\mathbb{N}$. Further, if there are more than one non-zero components of $x$, without loss of generality, we can assume that $x_1,~x_2\neq0$. If $\|x_1\|>\|x_2\|$, then for $\alpha>0$, define $y_{\alpha}:=\{y_n\}_{n\in\mathbb{N}}\in\bigoplus\limits_p\mathbb{X}_n$ given by
    \[y_n=\begin{cases}
        x_1,~~n=1,\\
        -\alpha x_2,~~n=2,\\
        0,~~\text{otherwise}.
    \end{cases}\]
    Hence, by Corollary \ref{siporthgonality}, $x\perp_By_\alpha$ if and only if $\|x_1\|^p=\alpha\|x_2\|^p$ and $y_\alpha\perp_Bx$ if and only if $\|x_1\|^p=\alpha^{p-1}\|x_2\|^p$, giving the desired contradiction as $p\neq2$. Further if additionally $x_3\neq0$, we can assume that $\|x_1\|=\|x_2\|=\|x_3\|$. Now for $\alpha,~\beta\in\mathbb{K}$, set $z_{\alpha,\beta}=\{z_n\}_{n\in\mathbb{N}}\in\bigoplus\limits_p\mathbb{X}_n$ given by
    \[z_n=
    \begin{cases}
        x_1,~~n=1,\\
        \alpha x_2,~~n=2,\\
        \beta x_3,~~n=3,\\
        0,~~\text{otherwise}.
    \end{cases}\]
    Thus by Corollary \ref{siporthgonality}, $x\perp_B z_{\alpha,\beta}$ if and only if
    \begin{align*}
        \|x_1\|^2+\alpha\|x_2\|^2+\beta\|x_3\|^2=0~~\Rightarrow~~1+\alpha+\beta=0.
    \end{align*}
    Also, $z_{\alpha,\beta}\perp_Bx$ if and only if 
    \begin{align*}
        1+|\alpha|^{p-1}\overline{\sgn(\alpha)}+|\beta|^{p-1}\overline{\sgn(\beta)}=0.
    \end{align*}
    Setting $\alpha=\beta=-\frac{1}{2}$, yields that $x$ cannot be left-symmetric. Thus, without loss of generality, we assume $\|x_1\|=\|x_2\|\neq0$ and $x_n=0$ for $n>2$. Let $\mathcal{O}_1:=\{y_1\in\mathbb{X}_1:x_1\not\perp_By_1\}$. Then $\mathcal{O}_1$ is an open subset of $\mathbb{X}_1$. Since the collection of smooth points is dense in $\mathbb{X}_1$, find $y_1\in\mathbb{O}_1$ smooth. Then for any semi-inner product $[\cdot,\cdot]$ on $\mathbb{X}_1$,
    \[[x_1,y_1]+\alpha\|x_2\|^2=0~~\Rightarrow~~x\perp_By,\]
    where $y_1$ is as chosen before, $y_2=\alpha x_2$ and $y_n=0$ otherwise. But then $y\perp_Bx$ giving:
    \[\|y_1\|^{p-2}[y_1,x_1]'+|\alpha|^p\frac{\|x_2\|^p}{\alpha}=0,\]
    for some semi-inner product $[\cdot,\cdot]'$ on $\mathbb{X}_1$. Hence, 
    \begin{align*}
        \|y_1\|^{p-2}[y_1,x_1]'=|[x_1,y_1]|^p\frac{\|x_2\|^{2-p}}{[x_1,y_1]}.
    \end{align*}
    Since $y_1$ is smooth, $[y_1,x_1]'$ is unique for any semi-inner product $[\cdot,\cdot]'$ on $\mathbb{X}_1$. Hence $[x_1,y_1]$ is uniquely determined on the region $\{y_1:y_1\in\mathcal{O}_1,~y_1~\text{smooth}\}$ for any semi-inner product $[\cdot,\cdot]$ on $\mathbb{X}_1$. Hence $[x_1,y_1]$ is uniquely determined for $y_1\in\mathcal{O}_1$. Since $\Span(\mathcal{O}_1)=\mathbb{X}_1$, $[x_1,y_1]$ is uniquely determined on $\mathbb{X}_1$, showing that $x_1$ is smooth. Further, given $y_1\in\mathbb{X}_1$, there exists a semi-inner product $[\cdot,\cdot]'$ on $\mathbb{X}_1$ such that
    \begin{align*}
        (\|x_1\|\|y_1\|)^{p-2}[x_1,y_1][y_1,x_1]'=|[x_1,y_1]|^p~\Rightarrow~[y,x]'=\left|\frac{[x_1,y_1]}{\|x_1\|\|y_1\|}\right|^{p-2}\overline{[x_1,y_1]},
    \end{align*}
finishing the proof for the left-symmetric case.  The necessity for the right-symmetric case follows similarly.   
\end{proof}

\end{document}